\crefname{appsec}{Appendix}{Appendices}
\theoremstyle{plain}
\newtheorem{theorem}{Theorem}[section]
\newtheorem{definition}[theorem]{Definition}
\newtheorem{proposition}[theorem]{Proposition}
\newtheorem{lemma}[theorem]{Lemma}
\crefname{lemma}{Lemma}{Lemmas}
\newtheorem{claim}[theorem]{Claim}
\newtheorem{observation}[theorem]{Observation}
\global\long\def\hh#1#2{\mathrm{h}_{#2}(#1)}
\global\long\def\vv#1#2{\mathrm{v}_{#2}(#1)}
\newcommand{\height}{\mathrm{h}}
\newcommand{\HT}{\mathrm{HT}}
\newcommand{\htf}{\mathrm{ht}_G}
\newcommand{\hth}{\mathrm{ht}_H}
\newcommand{\htt}{\mathrm{ht}_{G \setminus T}}
\newcommand{\N}{\mathbb N}
\newcommand{\llex}{\prec_{\mathrm{lex}}}
\newcommand{\glex}{\succ_{\mathrm{lex}}}
\global\long\def\E{\mathbb{E}}
\global\long\def\NN{\mathbb{N}}
\global\long\def\one{\boldsymbol{1}}
\global\long\def\floor#1{\left\lfloor #1\right\rfloor }
\global\long\def\Var{\operatorname{Var}}
\global\long\def\ceil#1{\left\lceil #1\right\rceil }
\global\long\def\ordeqE{\le_G}
\global\long\def\ordE{<_G}
\global\long\def\d{\overline{d}}
\global\long\def\ordeqV{\le_G^V}
\newcommand{\leqlex}{\preceq_{\mathrm{lex}}}
\newcommand{\geqlex}{\succeq_{\mathrm{lex}}}
\let\originalleft\left
\let\originalright\right
\renewcommand{\left}{\mathopen{}\mathclose\bgroup\originalleft}
\renewcommand{\right}{\aftergroup\egroup\originalright}
\global\long\def\mk#1{}
\begin{document}

\title{\texorpdfstring{\vspace{-1.5cm}}{}
Nearly-linear monotone paths in edge-ordered graphs}
\date{}
\author{
Matija Buci\'c\thanks{Department of Mathematics, ETH, Z\"urich, Switzerland. Email: \href{mailto:matija.bucic@math.ethz.ch} {\nolinkurl{matija.bucic@math.ethz.ch}}.}
\and
Matthew Kwan\thanks{Department of Mathematics, Stanford University, CA 94305. Email: \href{mailto:mattkwan@stanford.edu} {\nolinkurl{matthew.a.kwan@gmail.com}}. Research supported in part by SNSF project 178493.}
\and
Alexey Pokrovskiy\thanks{Department of Economics, Mathematics, and Statistics, Birkbeck, University of London, U.K. Email: \href{mailto:Dr.Alexey.Pokrovskiy@gmail.com} {\nolinkurl{Dr.Alexey.Pokrovskiy@gmail.com}}.}
\and
Benny Sudakov\thanks{Department of Mathematics, ETH, Z\"urich, Switzerland. Email:
\href{mailto:benjamin.sudakov@math.ethz.ch} {\nolinkurl{benjamin.sudakov@math.ethz.ch}}.
Research supported in part by SNSF grant 200021-175573.}
\and
Tuan Tran\thanks{Department of Mathematics, ETH, Z\"urich, Switzerland. Email:
\href{mailto:manh.tran@math.ethz.ch} {\nolinkurl{manh.tran@math.ethz.ch}}.
Research supported by the Humboldt Research Foundation.}
\and
Adam Zsolt Wagner\thanks{Department of Mathematics, ETH, Z\"urich, Switzerland. Email:
\href{mailto:zsolt.wagner@math.ethz.ch} {\nolinkurl{zsolt.wagner@math.ethz.ch}}.}
}

\maketitle

\begin{abstract}
How long a monotone path can one always find in any edge-ordering of the complete graph $K_n$? This appealing question was first asked by Chv\'atal and Koml\'os in 1971, and has since attracted the attention of many researchers, inspiring a variety of related problems. The prevailing conjecture is that one can always find a monotone path of linear length, but until now the best known lower bound was $n^{2/3-o(1)}$. In this paper we almost close this gap, proving that any edge-ordering of the complete graph contains a monotone path of length $n^{1-o(1)}$.
\end{abstract}

\section{Introduction}

An \emph{edge-ordering} of a graph $G$ is a total order $\ordeqE$ of the edge set $E(G)$. A path $P$ in $G$ is said to be \emph{monotone} if the consecutive edges of $P$ form a monotone sequence with respect to $\ordeqE$.

In 1971, Chv\'atal and Koml\'os~\cite{CK71} asked for the length of the longest monotone path one can guarantee in any edge-ordering of the complete $n$-vertex graph $K_n$. To be precise, let $f(K_n)$ be the maximum $\ell$ such that every edge-ordering of $K_n$ has a monotone path of length $\ell$. What is the value of $f(K_n)$, as a function of $n$? This question was originally motivated by an analogous problem for directed graphs, which is closely related to both of the celebrated theorems of Erd\H os and Szekeres concerning convex subsets of points in the plane and concerning monotone subsequences of sequences of real numbers. Although the question of Chv\'atal and Koml\'os sounds rather innocent, it has turned out to be quite challenging even to understand the order of magnitude of $f(K_n)$, and progress on this problem over the years has been rather sparse.

The first nontrivial results were proved by Graham and Kleitman~\cite{GK73}, who showed that there is always a monotone path of length $\Omega(\sqrt n)$. They also constructed an edge-ordering permitting no monotone path longer than $(3/4)n$. That is, $\Omega(\sqrt n)\le f(K_n)\le (3/4)n$. Apart from some results for small values of $n$ (see~\cite{BCM05}), until recently the only improvements were to the upper bound. First, R\"odl~\cite{rodl73} proved that $f(K_n)\le(2/3+o(1))n$, and then Calderbank, Chung and Sturtevant~\cite{CCS84} improved this to $f(K_n)\le(1/2+o(1))n$ (they also cited an unpublished upper bound of $(7/12+o(1))n$ by Alspach, Heinrich and Graham). In a recent breakthrough, Milans~\cite{mil17} obtained the first new lower bound in almost 50 years. He proved that any edge-ordering of $K_n$ always contains a monotone path of length $\Omega\left((n/\log n)^{2/3}\right)$. In this paper, we almost close the gap between upper and lower bounds on this problem, showing that there exists a nearly-linear monotone path.

\begin{theorem}
\label{thm:altitude-complete}
In any edge-ordering of the complete graph $K_n$, there is a monotone path of length $$f(K_n)\ge \frac{n}{2^{O\left(\sqrt{\log n\log \log n}\right)}}=n^{1-o(1)}.$$
\end{theorem}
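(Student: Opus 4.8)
The plan is to deduce \Cref{thm:altitude-complete} from a more flexible recursive statement about arbitrary edge-ordered graphs: \emph{if an $n$-vertex edge-ordered graph has no monotone path of length $\ell$, then its average degree $\d$ is at most $\ell\cdot 2^{O(\sqrt{\log n\log\log n})}$}. Applied to $K_n$, where $\d=n-1$, this gives the theorem, and the general form is what makes a recursion possible, since passing to the edges above a threshold inside a vertex subset never keeps us inside the class of complete graphs. The natural starting point is the classical \emph{altitude potential}: for a vertex $v$ let $\htf(v)$ be the length of the longest \emph{increasing walk} ending at $v$ (a walk whose edge-sequence is increasing in $\ordeqE$, with repeated vertices allowed). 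Revealing the edges in increasing order and updating the $\htf$-values, one checks that each edge raises $\sum_v\htf(v)$ by at least $1$, so $\sum_v\htf(v)\ge e(G)=\d n/2$ and some vertex has $\htf(v)\ge\d/2$. Hence there is always an increasing \emph{walk} of essentially linear length, and the whole difficulty is the passage from walks to paths: a long increasing walk may revisit vertices, and although one can always shortcut repetitions while preserving monotonicity, the resulting path can be far shorter.

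The core of the argument is therefore a walk-to-path dichotomy, run inside a carefully chosen vertex set $W$ and above a carefully chosen threshold edge: either the extremal increasing walk it produces prunes to an honest increasing \emph{path} of the required length, or its many self-intersections can be harvested as a \emph{resource} — they expose a subset $W'\subseteq W$ with $|W'|\ge|W|/\mathrm{polylog}\,n$, a new higher threshold edge, a large residual average degree among the edges of $W'$ above that threshold, and the crucial ``cleanliness'' guarantee that no vertex of $W'$ has accumulated more than a negligible altitude from edges below the new threshold (this last point is what lets the next round be a genuine fresh start rather than inheriting the old obstructions). To build $W'$ one works with a vertex order $\ordeqV$ read off from the altitude function and passes to a random subset in order to destroy short closed increasing walks; a first- and second-moment computation shows this can be arranged while losing only a polylogarithmic factor in $|W|$, which is the source of the $\mathrm{polylog}\,n$ loss per round. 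Iterating, each round either terminates the proof or hands the next round a cleaner sub-instance whose vertex set has shrunk by a $\mathrm{polylog}\,n$ factor while the monotone path we have committed to has grown.

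Finally one balances the recursion. Running it for $k$ rounds drives the vertex set down to $n/(\mathrm{polylog}\,n)^k$, so one can afford $k\approx\sqrt{\log n/\log\log n}$ rounds before exhausting it; arranging that the per-round gains assemble into a monotone path of length $\Theta\bigl(\d/(\mathrm{polylog}\,n)^k\bigr)$ in $G$, the accumulated loss is $(\mathrm{polylog}\,n)^{k}=2^{O(\log\log n\cdot\sqrt{\log n/\log\log n})}=2^{O(\sqrt{\log n\log\log n})}$, giving the stated bound. The main obstacle — and the conceptual advance over Milans's $n^{2/3-o(1)}$ bound — is precisely the dichotomy above: one must show that an extremal increasing walk that fails to be a path is not merely an obstruction but \emph{encodes} a strictly denser-and-cleaner sub-instance, and one must pay only a polylogarithmic price for extracting it, since only a super-constant number of rounds is able to close the gap to near-linear. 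A secondary technical point is ensuring that the ``cleanliness'' of the residual instance is preserved under the random sparsification, so that the per-round gain really is additive across all $k$ rounds rather than being swallowed by previously-used edges.
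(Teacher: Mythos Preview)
What you have written is a \emph{plan} rather than a proof, and the plan has a genuine gap precisely at the point you yourself flag as the main obstacle. The entire argument rests on the ``walk-to-path dichotomy'': either the long increasing walk prunes to a long increasing path, or its self-intersections can be converted into a sub-instance $W'\subseteq W$ that is simultaneously (i) only a $\mathrm{polylog}\,n$ factor smaller, (ii) still has large average degree above a new threshold, and (iii) is ``clean'' in the sense that no vertex has inherited significant altitude from below the threshold. You assert all three properties but give no mechanism for achieving any of them. In particular, the sentence ``passes to a random subset in order to destroy short closed increasing walks; a first- and second-moment computation shows this can be arranged while losing only a polylogarithmic factor'' is doing all the work and is not justified: random vertex deletion at rate $p$ reduces average degree by a factor $p$ just as it reduces the expected number of short closed walks, so there is no obvious gain; and there is no reason the surviving long walk should still be long, or that the sampled set should carry the residual density you need. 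Nor do you explain how the per-round path segments are concatenated into a single monotone path across the threshold jumps --- this is a real issue, since the pieces live in different vertex sets and must be glued at specific edges in the correct $\ordeqE$-order.

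For comparison, the paper's proof is structurally quite different and does not use a walk-to-path dichotomy or any randomness. It is an induction on an auxiliary integer parameter $t$ (the final choice is $t\approx\sqrt{\log d/\log\log n}$, which is where the $2^{O(\sqrt{\log n\log\log n})}$ arises). The inductive engine is a deterministic \emph{height table}: for a high edge $e$ one first shows (\cref{lemma:dense-graph-in neighbourhoods}) that the edges reachable from $e$ by short increasing \emph{trails} through the top rows of the table form a dense subgraph $G'$; a separate regularisation lemma then extracts a nearly-regular $H\subseteq G'$. Inside $H$ one repeatedly applies the inductive hypothesis to get many medium-length increasing paths, and a height-table stability lemma (\cref{LemmaLength3PathDrop}, which crucially needs the max-degree bound coming from regularisation) lets one stitch consecutive pieces with length-$3$ connectors while controlling the drop in height. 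A final trail-plus-path splicing lemma connects the result back to $e$. None of these ingredients --- the height table, the regularisation step to control $\Delta$, the edge-deletion stability lemma, or the trail-to-path splicing --- appears in your outline, and each is doing real work that your dichotomy would have to replicate.
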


Although Chv\'atal and Koml\'os' original question was for complete graphs, it is also natural to ask the analogous question for other graphs. The \emph{altitude} $f(G)$ of a graph $G$ is defined as the maximum $k$ such that every edge-ordering of $G$ has a monotone path of length $k$. R\"odl~\cite{rodl73} proved that if $G$ has average degree $d$ then $f(G)=\Omega(\sqrt d)$. For sufficiently dense graphs, Milans~\cite{mil17} improved this to $f(G)=\Omega(d/{(n^{1/3}(\log n)^{2/3}}))$, where $n$ is the number of vertices in $G$. Of course, the longest path in a graph with average degree $d$ may be as short as $d$ (if $G$ is a disjoint union of cliques of size $d+1$), in which case $f(G)\le d$. In general, it follows from Vizing's theorem that $f(G) \le \Delta(G)+1$ (where $\Delta(G)$ denotes the maximum degree of $G$). This and several other upper bounds depending on various parameters of the graph were obtained in \cite{RSY01} by Roditty, Shoham and Yuster. On the other hand it was proved by Alon~\cite{Alo03} (improving a result of Yuster~\cite{Yus01}) that there are $d$-regular graphs with altitude at least $d$. Here we prove the almost-optimal result that \emph{all} graphs with average degree $d$ have altitude almost as large as $d$, as long as $d$ is not too small.

\begin{theorem}
\label{thm:altitude-d}
Let $G$ be a graph with $n$ vertices and average degree $d\geq 2$. Then $$f(G)\ge \frac{d}{2^{O\left(\sqrt{\log d \log \log n}\right)}}.$$
\end{theorem}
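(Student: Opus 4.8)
I will prove \cref{thm:altitude-d} directly; \cref{thm:altitude-complete} is then the special case $G=K_n$, $d=n-1$, for which $\sqrt{\log d\log\log n}=\Theta(\sqrt{\log n\log\log n})$, so no separate argument for complete graphs is needed.

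The first step is the standard reduction to large minimum degree: deleting from $G$ a vertex of current degree $<d/2$ removes fewer than $d/2$ edges, so such deletions cannot account for all $dn/2$ edges of $G$ and therefore terminate at a nonempty subgraph $G'$ of minimum degree at least $d/2$. Since $G'$ has at most $n$ vertices, it suffices to prove the analogous statement for graphs of minimum degree $\delta$ on at most $n$ vertices — namely, that every edge-ordering of such a graph has a monotone path of length $\delta/2^{O(\sqrt{\log\delta\log\log n})}$ — and then apply it with $\delta=d/2$, the extra factor of $2$ being absorbed into the $O(\cdot)$.

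The engine I would use is the Graham--Kleitman height function. Processing the edges in increasing order and recording for each vertex $v$ the length $\mathrm{h}(v)$ of the longest monotone \emph{walk} currently ending at $v$, one has $\mathrm{h}(u)+\mathrm{h}(v)$ increasing each time an edge $uv$ is processed, so $\sum_v\mathrm{h}(v)$ grows proportionally to the number of edges and some vertex attains $\mathrm{h}(v)=\Omega(\delta)$. The difficulty is that this yields only a monotone \emph{walk}, and a long monotone walk need not contain a long monotone path at all: in a ``friendship'' graph (many cliques sharing one vertex) monotone walks can be arbitrarily long while the longest monotone path is bounded. So a single global application of the height function is hopeless, and the plan is to use it in $t$ short stages. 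Partition the edge-ordering into consecutive intervals $E_1<\cdots<E_t$, and maintain, after stage $j$, a large vertex set $A_j$ together with, for each $v\in A_j$, a genuine monotone \emph{path} ending at $v$, of length at least $\ell_j$, using only edges of $E_1\cup\cdots\cup E_j$ — together with enough control over this family of paths (most importantly, a bound on how many of them pass through any fixed vertex, enforced with the help of a random partition of the vertex set) that in stage $j+1$ the height-function dynamics restricted to $E_{j+1}$ and to $A_j$ prepends edges to these paths without ever repeating a vertex. Running this until the interval graphs become too sparse to iterate, with a per-stage loss polylogarithmic in $n$, and choosing $t$ of order $\sqrt{\log\delta/\log\log n}$ to balance the number of stages against this loss, should deliver a monotone path of length $\ell_t\ge\delta/2^{O(\sqrt{\log\delta\log\log n})}$. (One may equivalently organise the argument as a recursion on a progressively sparsified subgraph.)

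I expect the main obstacle to be the analysis of a single stage: extending a large fan of monotone paths through one interval of the edge-ordering while keeping each extension internally vertex-disjoint, and doing so while losing only a polylogarithmic factor in the number of usable vertices and in the effective minimum degree, so that $t$ can be taken as large as claimed. This requires combining the height-function accounting with the random partition so that the walks produced inside a stage are locally loop-free — here one uses that a monotone walk never immediately backtracks, hence is a path once it avoids short cycles — and carefully controlling the ``boundary'' vertices and path-ends that are lost when passing from $E_j$ to $E_{j+1}$, so that this loss does not compound over the stages. This is where essentially all of the work of the proof will lie.
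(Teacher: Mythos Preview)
Your proposal is a plan, not a proof: you explicitly defer ``essentially all of the work'' to a single-stage lemma that you do not prove, namely that a large fan of monotone paths can be extended through one interval of the edge-ordering while staying internally vertex-disjoint and losing only a polylogarithmic factor. That lemma is the whole theorem. The mechanism you suggest for it --- a random vertex partition together with ``a monotone walk never immediately backtracks, hence is a path once it avoids short cycles'' --- does not work as stated. Avoiding short cycles only forces a walk of length below the girth to be a path; to get a path of length $\delta/2^{O(\sqrt{\log\delta\log\log n})}$ you would need girth of that order, which a graph of average degree $\delta$ on $n$ vertices simply cannot have unless $\delta$ is extremely small. No random partition of the vertex set will manufacture that much girth while retaining the density you need for the height-function argument. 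So the approach, as written, has a genuine gap at its central step.

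For comparison, the paper takes a quite different route. It does not partition the edge-ordering into intervals at all. Instead it works with Milans' \emph{height table}, and the new ingredients are: (i) a lemma (\cref{LemmaEdgeDrop}) controlling how heights change when edges are deleted, (ii) a ``regularisation lemma'' producing a subgraph with $\Delta=O(\d)$, and (iii) an exploration lemma (\cref{lemma:dense-graph-in neighbourhoods}) that, from any high edge $e$, finds a dense subgraph all of whose edges are reachable from $e$ by a short increasing trail. The proof is then an induction on a parameter $t$: regularise, apply the induction hypothesis inside the top $m$ rows of the height table to get a path of length $\ell$, use \cref{LemmaLength3PathDrop} to hop to a new high edge after deleting the path, and repeat about $\d/m$ times; finally glue back to $e$ via \cref{lemma:join-trail-path}. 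Optimising $m$ and $t$ gives the bound. The regularisation step is essential precisely because the ``hop'' lemma needs $\Delta$ bounded in terms of $\d$ --- this is the paper's substitute for the control you were hoping to get from a random partition, and it is where the polylogarithmic loss per stage actually comes from.
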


\noindent We remark that the general notion of altitude for non-complete graphs is actually essential for our proof of \cref{thm:altitude-complete}. \cref{thm:altitude-complete} is clearly a special case of \cref{thm:altitude-d}, which itself will be a corollary of a more technical result (\cref{thm:1-eps-regular}) essentially giving a lower bound on $f(G)$ for graphs $G$ satisfying certain conditions.

Apart from the work already mentioned, Chv\'atal and Koml\'os' question has inspired a large number of related problems. Various authors have studied the altitude of specific graphs or classes of graphs, such as planar graphs, trees, hypercubes and random graphs (see for example~\cite{CM05,DMPRT16,MBCFH05,RSY01}). There has also been interest in finding the longest monotone \emph{trail}, rather than path, in edge-ordered graphs (a trail is a walk in a graph which may repeat vertices but not edges). This problem was already proposed in the 1971 paper of Chv\'atal and Koml\'os, and was solved for complete graphs by Graham and Kleitman~\cite{GK73}. Another interesting variant of Chv\'atal and Koml\'os' question considers a \emph{random} edge-ordering of the edges of a graph, instead of the worst-case ordering. The study of this problem for complete graphs was suggested by Lavrov and Loh~\cite{LL16}, who conjectured that with probability $1-o(1)$, the random edge-ordering of the complete graph contains a monotone Hamiltonian path. This was proved by Martinsson~\cite{Mar16}, and the problem of finding the longest monotone trail in the random edge-ordering was recently solved by Angel, Ferber, Sudakov and Tassion~\cite{AFST18}.

\vspace{0.30cm}

\noindent{\bf Structure of the paper and the proof:} The rest of the paper is organised as follows. In \cref{sec:regularisation} we prove a ``regularisation lemma'' showing that graphs with average degree $d$ have a subgraph with minimum and maximum degree close to $d$. We find this lemma to be of independent interest and believe that it might be useful for other applications as well.

In \cref{sec:height-table} we discuss the notion of a \emph{height table} first introduced by Milans. This is a structure that arranges the edges of a graph in a way that is convenient for finding monotone paths. We also prove some crucial lemmas describing how the height table changes after deleting edges and vertices. Results of a similar flavour were proved by Milans, but our results have much more flexibility. These results will be strongest for graphs that are close to regular (which explains why we need the regularisation lemma).

Then, in \cref{sec:main-proof} we present the details of our proof of \cref{thm:altitude-d}. A crucial ingredient which is completely new to our proof is a lemma showing that given a particular edge $e$ in an edge-ordered graph $G$, there is a way to explore the height table to find a dense subgraph of $G$ consisting of edges which are accessible from $e$ via a short monotone trail (then, if we can find a monotone path in this subgraph, we can connect it to $e$). Our proof then takes an iterative approach, inductively finding monotone paths in various subgraphs and stitching them together.

In \cref{sec:concluding} we have some concluding remarks, including a simple observation that sparse random graphs typically have monotone paths of length linear in their average degree. Finally, in \cref{sec:no-regular-subgraph} we present a construction showing that our regularisation lemma is essentially sharp.

\noindent{\bf Notation:} We use standard asymptotic notation throughout. For functions $f=f\left(n\right)$
and $g=g\left(n\right)$ we write $f=O\left(g\right)$ to mean there
is a constant $C$ such that $\left|f\right|\le C\left|g\right|$,
we write $f=\Omega\left(g\right)$ to mean there is a constant $c>0$
such that $f\ge c\left|g\right|$ for sufficiently large $n$, we
write $f=\Theta\left(g\right)$ to mean that $f=O\left(g\right)$
and $f=\Omega\left(g\right)$, and we write $f=o\left(g\right)$ or
$g=\omega\left(f\right)$ to mean that $f/g\to0$ as $n\to\infty$.
All asymptotics are as $n\to\infty$ unless specified otherwise.

We also use standard graph-theoretic notation. For any graph $G$, we denote by $V(G)$ its set of vertices, by $E(G)$ its set of edges, by $\d(G)=2|E(G)|/|G|$ its average degree and by $\Delta(G)$ its maximal degree. In a directed graph we denote the out-degree and in-degree of a vertex $v$ by $d^+(v)$ and $d^-(v)$ respectively.

For a real number $x$, the floor and ceiling functions are denoted
$\floor x=\max\{i\in\mathbb{Z}:i\le x\}$ and $\ceil x=\min\{i\in\mathbb{Z}:i\ge x\}$.  Finally, all logarithms are in base 2, unless stated otherwise.

\section{A regularisation lemma}
\label{sec:regularisation}
In this section we will present a lemma which we will need in the proof of our main result and which we believe might have applications in other situations as well. Given a graph with average degree $d$, this lemma allows us to find an almost-regular subgraph whose average degree is only slightly smaller than $d$. Our lemma is closely related to a conjecture of Erd{\H os} and Sauer, proved by Pyber~\cite{Pyb85}, regarding the existence of regular subgraphs in graphs with many edges (see also~\cite{AFK84,AKS08}).

\begin{lemma}
\label{lem:regularisation}Let $G$ be a graph with $n\ge 2$ vertices
and average degree $d$. Then $G$ has a (possibly non-induced) subgraph
with all degrees lying in the range $\left[d',6d'\right]$, where $d'=\left(\floor{\left(d/4-1\right)/\ceil{\log n}}+1\right)/6$.
\end{lemma}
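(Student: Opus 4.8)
The plan is a two-stage argument: first fix up the minimum degree, then extract an almost-regular piece via a dyadic decomposition of the vertex set by degree.

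\textbf{Stage 1 (minimum degree).} Repeatedly delete any vertex of current degree less than $d/4$. Each deletion destroys fewer than $d/4$ edges, so fewer than $(d/4)n$ edges are destroyed in total; as $G$ has $dn/2$ edges, the process cannot empty the graph and terminates with a nonempty subgraph $H$ with $\delta(H)\ge d/4$. The precise choice of threshold here, together with the integrality of $\delta(H)$, is exactly what produces the floor and the ``$-1$'' in the definition of $d'$. Note $|V(H)|\le n$ and $e(H)\ge\delta(H)\,|V(H)|/2$.

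\textbf{Stage 2 (dyadic decomposition).} Set $k=\lceil\log n\rceil$, $D=\delta(H)$, and $V_i=\{v\in V(H):2^iD\le d_H(v)<2^{i+1}D\}$; since all degrees lie in $[D,n)$, at most $k$ of the $V_i$ are nonempty. Every edge of $H$ lies inside one class or joins two classes. In the first case some $V_{i^*}$ has $e(H[V_{i^*}])\ge e(H)/(2k)$; in the second case, telescoping over the $k$ thresholds ``degree $<2^iD$ vs.\ degree $\ge 2^iD$'' pins down a threshold whose bipartite part carries $\ge e(H)/(2k)$ edges. Either way we obtain a subgraph $H'\subseteq H$ on $\le|V(H)|\le n$ vertices with $e(H')\ge e(H)/(2k)$, hence average degree $\ge D/(2k)$, and with maximum degree at most $2^{i^*+1}D$. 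If $\Delta(H')$ still exceeds a constant times its average degree, we iterate the dyadic step on $H'$: the gain is that once the maximum degree is controlled in terms of $D$, the number of dyadic classes shrinks (from $\approx\log n$ to $\approx\log\log n$, and so on), so after boundedly many rounds the current subgraph has bounded max-to-average-degree ratio. Finally, to reach the exact window $[d',6d']$, one can take a proper edge-colouring of this subgraph (Vizing) and keep the union of a suitable number of colour classes: at most one kept edge of each colour meets a given vertex, so the maximum degree equals the number of kept colours, while a minimum-degree vertex still meets a constant fraction of them; choosing the number of kept colours of order $d'$ puts all degrees into $[d',6d']$.

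\textbf{Main obstacle.} The delicate part is the accounting of the multiplicative loss. Each round of the dyadic step loses a factor proportional to the current number of classes, so to finish with a single $\Theta(\log n)$ loss, rather than a power of $\log n$, one must arrange that every round after the first costs only a constant factor --- for instance by always selecting a class that simultaneously carries many edges and has small index, or by bounding how many ``expensive'' rounds can occur. The rest is routine bookkeeping: verifying the case of small $d$ (where $d'\le 1$ and the empty graph, or a single edge, already works), and tracking floors and ceilings so that the constants in $d'=\bigl(\lfloor(d/4-1)/\lceil\log n\rceil\rfloor+1\bigr)/6$ come out exactly.
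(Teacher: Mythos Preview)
Your approach is genuinely different from the paper's, which follows Pyber: pass to a bipartite subgraph with minimum degree $d/4$, use a Hall-type argument to peel off edge-disjoint perfect matchings $M_1,\dots,M_{d/4}$ on nested vertex sets $A_i\times B_i$, and then pigeonhole on the sizes $|B_i|$ to find a window of $x\approx d/(4\log n)$ consecutive matchings over which $|B_q|\le 2|B_{q+x}|$. The union of those matchings is already almost regular. There is no iteration and no degree-class decomposition.

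Your proposal, by contrast, has two real gaps.

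\emph{The iteration does not obviously contract.} You pick a dyadic degree class $V_{i^*}$ (or a bipartite cut) carrying $\ge e(H)/(2k)$ edges, and assert that in the next round the number of classes drops to about $\log\log n$. But after passing to $H'$ and restoring minimum degree $\ge D/(4k)$, the max-to-min ratio is $\Theta\!\bigl(2^{i^*}k\bigr)$, and nothing in your selection rule bounds $i^*$. If the densest class happens to be a high-index one, the ratio stays $\Theta(n)$ and you have made no progress. Your ``main obstacle'' paragraph acknowledges this but does not resolve it; the suggestions there (pick a class with many edges \emph{and} small index, or bound the number of expensive rounds) are exactly the missing lemma, and it is not clear either can be made to work with only a single $\log n$ loss.

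\emph{The Vizing finishing step is circular.} A proper edge-colouring uses $\Delta+1$ colours, and a vertex of degree $m$ meets only $m$ of them. If you keep $c$ colour classes, a minimum-degree vertex is guaranteed to meet at least $c-(\Delta+1-m)$ of them, so to land in $[d',6d']$ you need $\Delta-m\le 5d'$, i.e.\ the graph must already satisfy $\Delta\le 6\delta$ up to constants. Randomising the kept colours does not help: the expected retained degree at a minimum-degree vertex is $cm/(\Delta+1)$, forcing the same constraint. So Vizing can only trim an already-almost-regular graph, not produce one.

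Stage~1 is fine, and the dyadic instinct is natural, but as written the argument does not close; the paper's matching-peeling idea is what buys the sharp $d/\log n$ bound in one shot.
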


To prove \cref{lem:regularisation} we follow the approach of Pyber
\cite{Pyb85} used to solve the Erd{\H o}s--Sauer conjecture \cite{ES81}. The heart of the proof is
the following lemma.
\begin{lemma}
\label{lem:pyber-matching}Let $G$ be a bipartite graph with bipartition
$A\cup B$ and minimum degree $\delta$, such that $\left|A\right|\ge\left|B\right|$.
Then, we can find sequences of nonempty subsets $A=A_{0}\supseteq A_{1}\supseteq\dots\supseteq A_{\delta}$
and $B=B_{0}\supseteq B_{1}\supseteq\dots\supseteq B_{\delta}$, and
edge-disjoint matchings $M_{1},\dots,M_{\delta}$, such that $\left|A_{i}\right|=\left|B_{i}\right|$ for each $1\le i\le\delta$, and each $M_{i}$ is a perfect matching
of $G\left[A_{i}\cup B_{i}\right]$.
\end{lemma}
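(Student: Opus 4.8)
The plan is to build the matchings greedily, one at a time, each being a perfect matching on an ever-smaller, balanced pair of vertex subsets, while keeping a one-sided minimum-degree invariant. Set $A_0=A$, $B_0=B$, and suppose that after $i$ steps we have found nested nonempty subsets $A_0\supseteq\dots\supseteq A_i$ and $B_0\supseteq\dots\supseteq B_i$ and edge-disjoint matchings $M_1,\dots,M_i$ as in the statement, and in addition that in $G_i:=G\setminus(M_1\cup\dots\cup M_i)$, restricted to $A_i\cup B_i$, every vertex of $A_i$ has degree at least $\delta-i$ (this holds trivially for $i=0$). To carry out the $(i{+}1)$-st step it suffices to find nonempty $A_{i+1}\subseteq A_i$ and $B_{i+1}\subseteq B_i$ with $|A_{i+1}|=|B_{i+1}|$ such that $G_i[A_{i+1}\cup B_{i+1}]$ has a perfect matching $M_{i+1}$ in which every vertex of $A_{i+1}$ still has degree at least $\delta-i$: indeed, $M_{i+1}$ is then automatically edge-disjoint from $M_1,\dots,M_i$ and is a perfect matching of $G[A_{i+1}\cup B_{i+1}]$, and deleting it drops the degree of every vertex of $A_{i+1}$ by exactly one, restoring the invariant with $\delta-(i{+}1)$ in place of $\delta-i$. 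Iterating for $i=0,\dots,\delta-1$ produces all of $M_1,\dots,M_\delta$.

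Thus everything reduces to the following one-sided statement, applied each time to $H=G_i[A_i\cup B_i]$ with $k=\delta-i$ (using $|A|\ge|B|$ at the start and $|A_i|=|B_i|$ thereafter): \emph{if $H$ is bipartite with parts $X,Y$, $|X|\ge|Y|$, and every vertex of $X$ has degree at least $k\ge1$ in $H$, then $H$ has a nonempty balanced subgraph $H[X'\cup Y']$ with $|X'|=|Y'|$ that admits a perfect matching and in which every vertex of $X'$ still has degree at least $k$.}

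To prove this one-sided statement, for $T\subseteq X$ write $p(T)=|N_H(T)|-|T|$, neighbourhoods taken in $Y$. Since $p(X)\le|Y|-|X|\le0$, the minimum of $p$ over nonempty subsets of $X$ is at most $0$; let $X_0$ be a nonempty subset of $X$ minimising $p$, set $m:=p(X_0)\le0$ and $Y_0:=N_H(X_0)$, so $|Y_0|=|X_0|+m\le|X_0|$ and $Y_0\ne\emptyset$. The key feature is that every vertex of $X_0$ has all of its $H$-neighbours inside $Y_0$. I then claim $H[X_0\cup Y_0]$ contains a matching saturating $Y_0$: for $R\subseteq Y_0$ put $T:=\{u\in X_0:N_H(u)\cap R=\emptyset\}$, so $N_H(T)\subseteq Y_0\setminus R$; if $T=\emptyset$ then the neighbourhood of $R$ in $H[X_0\cup Y_0]$ equals $X_0$, of size $\ge|Y_0|\ge|R|$, and otherwise $m\le p(T)\le|Y_0\setminus R|-|T|$, which, using $m=|Y_0|-|X_0|$, rearranges to $|X_0\setminus T|\ge|R|$ — and $X_0\setminus T$ is exactly the neighbourhood of $R$ in $H[X_0\cup Y_0]$, so Hall's condition holds. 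Taking such a matching, letting $X'$ be its endpoints in $X_0$ and $Y':=Y_0$, we obtain $|X'|=|Y'|\ge1$, a perfect matching of $H[X'\cup Y']$, and, since each $u\in X'\subseteq X_0$ has all $\ge k$ of its $H$-neighbours inside $Y_0=Y'$, the required degree bound.

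The main obstacle, and the reason for the asymmetric formulation, is that the naive approach of peeling off perfect matchings on the full vertex set and recursing fails: passing to an induced subgraph can destroy vertex degrees on \emph{both} sides of the bipartition, so a two-sided minimum-degree invariant cannot be maintained. Controlling only the larger side $X$ — and, on passing to a set $X_0$ minimising $|N_H(\cdot)|-|\cdot|$, exploiting that its vertices retain all of their neighbours — is exactly what lets the Hall-type argument and the degree count go through; this is the only genuinely non-routine ingredient.
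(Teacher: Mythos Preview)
Your proof is correct and follows essentially the same strategy as the paper: iteratively pick an extremal subset of the $A$-side whose neighbourhood is no larger than itself, apply Hall's theorem to extract a perfect matching on a balanced pair, and recurse while maintaining the one-sided invariant that $A_i$'s neighbours in the residual graph all lie in $B_i$. The only cosmetic difference is that the paper takes an inclusion-minimal set with $|N(T)|\le|T|$ (which forces $|N(T)|=|T|$ and gives Hall's condition on the $A$-side directly), whereas you take a global minimiser of $|N(T)|-|T|$, verify Hall on the $Y$-side, and then trim $X_0$ down to the matched vertices $X'$.
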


\begin{proof}
We construct these sequences iteratively, with the additional property that $N_{G_i}(A_{i}) \subseteq B_{i}$, where $G_i$ is a graph obtained from $G$ by removing the edges of $M_1,\dots ,M_{i-1}$ (so $G_1=G$). By definition this property is satisfied for $i=0$. Suppose for some $1\le i\le\delta$
that $A_{0},\dots,A_{i-1}$, $B_{0},\dots,B_{i-1}$ and $M_{1},\dots,M_{i-1}$
have already been constructed; we show how to construct $A_{i},B_{i},M_{i}$.
Let $A_{i}\subseteq A_{i-1}$ be a minimal nonempty subset of
$A_{i-1}$ such that $\left|N_{G_{i}}\left(A_{i}\right)\right|\le\left|A_{i}\right|$, and let $B_i=N_{G_i}(A_{i})$. Since $\left|N_{G_{i}}\left(A_{i-1}\right)\right|\le \left|N_{G_{i-1}}\left(A_{i-1}\right)\right|\le\left|B_{i-1}\right|\le \left|A_{i-1}\right|$, the set $A_i$ is well-defined, and because $G_i$ has minimum degree at least $\delta-(i-1)$ we get $|B_i|\ge \delta-(i-1)\ge1$.

We also claim that $\left|B_{i}\right|=\left|A_{i}\right|$. Indeed, if $|B_i|<|A_i|$, then $|A_i|>1$, and deleting any element of $A_i$ would give a smaller nonempty set $A_i'$ satisfying $\left|N_{G_{i}}\left(A_{i}'\right)\right|\le\left|A_{i}'\right|$, which contradicts the minimality of $A_{i}$. Finally, consider the induced subgraph $G_{i}\left[A_{i}\cup B_{i}\right]$.
Observe that by minimality $\left|N_{G_{i}}\left(S\right)\right|\ge\left|S\right|$
for $S\subseteq A_{i}$. So, by Hall's theorem $G_{i}\left[A_{i}\cup B_{i}\right]$ has
a perfect matching $M_{i}$, as desired.
\end{proof}
We will also need two additional (well-known) lemmas. First, we need the fact that any graph $G$ has a bipartite subgraph with at least half as many edges as $G$. See for example \cite[Lemma~1]{Erd65}.
\begin{lemma}
\label{lem:maxcut}Let $G$ be a graph with $n$ vertices and average
degree $d$. Then $G$ has a bipartite subgraph with average degree
at least $d/2$.
\end{lemma}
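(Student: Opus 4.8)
The plan is to use a standard max-cut argument. First I would fix a partition $V(G) = A \cup B$ that maximises the number of edges with one endpoint in $A$ and one in $B$; call these the \emph{crossing edges}, and let $H$ be the spanning bipartite subgraph of $G$ consisting of exactly the crossing edges (so $V(H)=V(G)$). For any vertex $v$, if strictly more than half of the neighbours of $v$ lay on the same side of the partition as $v$ itself, then moving $v$ to the other side would strictly increase the number of crossing edges, contradicting maximality. Hence $\deg_H(v) \ge \deg_G(v)/2$ for every $v \in V(G)$.

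Summing this inequality over all vertices and using $\sum_{v} \deg_G(v) = dn$, we get $2|E(H)| = \sum_{v} \deg_H(v) \ge dn/2$, so $|E(H)| \ge dn/4$. Since $H$ is spanning, its average degree is $2|E(H)|/n \ge d/2$, which is exactly what is claimed.

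An equally short alternative is the probabilistic argument: place each vertex independently in $A$ or $B$ with probability $1/2$ each; then every edge is crossing with probability $1/2$, so the expected number of crossing edges is $|E(G)|/2 = dn/4$, and some outcome attains at least this many, yielding the same spanning bipartite subgraph of average degree at least $d/2$. Either way the proof is entirely routine; there is no genuine obstacle here. The only point worth keeping in mind is that the bipartite subgraph should be taken to be spanning, so that the bound ``at least half the edges'' passes cleanly to ``at least half the average degree'' on the \emph{same} vertex set.
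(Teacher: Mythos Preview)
Your argument is correct; the max-cut extremal argument and the probabilistic argument are both standard and complete proofs of this fact. The paper itself does not actually give a proof: it simply states the lemma as well known and refers the reader to \cite[Lemma~1]{Erd65}, so there is no ``paper's own proof'' to compare against beyond noting that your approach is exactly the classical one being cited.
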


Next, we need the fact that a graph with average degree $d$ has a subgraph with minimum degree at least $d/2$. See for example~\cite[Proposition~1.2.2]{Die}.
\begin{lemma}
\label{lem:min-degree}Let $G$ be a graph with $n$ vertices and
average degree $d$. Then $G$ has an induced subgraph with minimum
degree at least $d/2$.
\end{lemma}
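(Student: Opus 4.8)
The plan is to use the standard greedy vertex-deletion argument, phrased as a proof by contradiction. Suppose, for contradiction, that no induced subgraph of $G$ has minimum degree at least $d/2$. I would then build a sequence of induced subgraphs $G=H_{0}\supsetneq H_{1}\supsetneq\dots\supsetneq H_{n}=\emptyset$: given a nonempty $H_{i}$ (which exists for each $i<n$), the assumption guarantees a vertex $v_{i}$ with $\deg_{H_{i}}(v_{i})<d/2$, and I set $H_{i+1}=H_{i}-v_{i}$. This process runs for exactly $n$ steps and deletes every vertex of $G$.

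The second step is a double-counting of the edges. Passing from $H_{i}$ to $H_{i+1}$ destroys exactly $\deg_{H_{i}}(v_{i})<d/2$ edges, so summing over $i=0,\dots,n-1$ the total number of edges destroyed over the whole process is strictly less than $n\cdot d/2$. On the other hand, every edge of $G$ is destroyed at exactly one step, namely when the first of its two endpoints is deleted, so the total number of edges destroyed equals $|E(G)|=\d(G)\cdot n/2=dn/2$. This contradicts the strict inequality above, so in fact some $H_{i}$ has minimum degree at least $d/2$, as required.

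The argument is entirely elementary and I do not expect a genuine obstacle. The only point that needs a little care is that the per-step strict inequality $\deg_{H_{i}}(v_{i})<d/2$ accumulates to a strict inequality $<dn/2$ for the total, which is exactly what is needed to contradict the identity $|E(G)|=dn/2$. If one prefers to avoid the contradiction framing, the same reasoning can be stated positively: the greedy rule ``while some vertex has current degree $<d/2$, delete it'' cannot exhaust all $n$ vertices (otherwise it would destroy fewer than $dn/2$ edges while destroying all of them), so it must terminate at a nonempty induced subgraph, and by the termination condition every vertex of that subgraph has degree at least $d/2$.
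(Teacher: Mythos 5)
Your proof is correct. The paper does not prove this lemma itself but cites Diestel (Proposition 1.2.2), and the argument there is precisely this greedy vertex-deletion with the double-counting of destroyed edges, so your approach matches the intended one.
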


Now we prove \cref{lem:regularisation}.
\begin{proof}[Proof of \cref{lem:regularisation}]
Let $G'$ be a bipartite subgraph of $G$ with bipartition $A\cup B$
(say $\left|A\right|\ge\left|B\right|$) and minimum degree at least $d/4$.
Such a subgraph exists by \cref{lem:maxcut} and \cref{lem:min-degree}.
Then, apply \cref{lem:pyber-matching} to $G$ to obtain nonempty subsets
$A\supseteq A_{1}\supseteq\dots\supseteq A_{d/4}$ and $B\supseteq B_{1}\supseteq\dots\supseteq B_{d/4}$,
and edge-disjoint matchings $M_{1},\dots,M_{d/4}$, such that each
$M_{i}$ is a perfect matching of $G'\left[A_{i}\cup B_{i}\right]$.

Let $x=\floor{\left(d/4-1\right)/\ceil{\log n}}$. Observe that
there must be some $q$ (satisfying $1\le q\le d/4-x$) such that
$\left|B_{q}\right|\le2\left|B_{q+x}\right|$. Indeed, otherwise we
would have $1+\ceil{\log n}x\le d/4$ and
\[
\left|B_{1}\right|>2^{\ceil{\log n}}\left|B_{1+\ceil{\log n}x}\right|\ge n,
\]
which is impossible. Fix such a $q$, and let $H$ be the graph on the vertex set $A_{q}\cup B_{q+x}$ containing all the edges of the matchings $M_q,\dots,M_{q+x}$ which are contained in $A_{q}\cup B_{q+x}$.
Each of the vertices in $B_{q+x}$ have degree $x+1$ in $H$, and
$\left|A_{q}\cup B_{q+x}\right|=\left|B_{q}\right|+\left|B_{q+x}\right|\le3\left|B_{q+x}\right|$,
meaning that the average degree of $H$ is at least $\left(x+1\right)/3$.
Also, each vertex in $A_{q}$ has degree at most $x+1$ in $H$.
The desired result then follows from \cref{lem:min-degree}.
\end{proof}
One can prove that the assertion of \cref{lem:regularisation} is essentially optimal. As we will not make use of this fact, we defer further details to \cref{sec:no-regular-subgraph}.

\section{Height tables}
\label{sec:height-table}

The notion of a \emph{height table} of a graph was introduced by Milans in \cite{mil17}, and will play a central role in our proof of \cref{thm:altitude-d}. In this section we make some definitions and prove a few lemmas regarding height tables.

Height tables are only uniquely defined for graphs which have an ordering on both their vertices and edges. An \emph{ordered} graph is a graph $G$ equipped with a total order $\ordeqE$ on $E(G)$ and a total order $\ordeqV$ on $V(G)$. We emphasise that the only purpose of the vertex ordering is for height tables to be well-defined, and monotone paths are defined only with respect to the edge-ordering. Given an ordered graph $G$ we define the lexicographic order $\llex$ on $\N\times V(G)$ by $(i,v)\leqlex (i',v')$ if either $i < i'$ or $i=i'$ and $v \ordeqV v'$. In this paper, the natural numbers do not include zero.

\begin{definition}
For an ordered graph $G$, the \emph{height table} of $G$, denoted $\HT(G)$, is a partially filled array indexed by $\N\times V(G)$, constructed as follows. Go through every $(i,v)\in \N\times V(G)$ in increasing order according to $\leqlex$. Let $\HT(G)_{i,v}$ be the $\ordeqE$-largest edge containing $v$ which has not yet been entered into $\HT(G)$. (If all edges containing $v$ have already been entered into $\HT(G)$, then $\HT(G)_{i,v}$ is left blank).
\end{definition}

Notice that every edge of $G$ gets entered exactly once in $\HT(G).$ For an edge $e\in E(G)$, let the \emph{height} of $e$ in $G,$ denoted by $\hh e G,$ be the row of $\HT(G)$ containing $e,$ and let $\vv e G$ be the column of $\HT(G)$ containing $e.$ For an edge $e\in E(G)$ we write $\htf(e)=(\hh e G,\vv e G).$ \mk{I changed $\height(e,G)$ to $\hh e G$ for consistency with $\htf$. I made it a macro in case you want to change it back.} We emphasise that the order $\leqlex$ runs roughly in the \emph{opposite} order to $\ordeqE$: if $e$ is $\ordeqE$-large, then it will tend be entered into the height table early in its construction, meaning that $\htf(e)$ will be $\leqlex$-small.

To see why the notion of a height table is useful for finding monotone paths, it is convenient to think of paths as having a specified direction (therefore we can say a path in an edge-ordered graph is increasing or decreasing). Starting from an edge $xy$ high up in the height table (in column $x=\vv{xy}{G}$, say), we can then look at the edge $yz$ in position $(y,\hh{xy}{G}-1)$ (we will see in \cref{ObservationNonEmptyBelowEntry} that this position is always nonempty, by the definition of a height table). We can then look at the edge $zw$ in position $(z,\hh{xy}{G}-2)$, and so on, building a sequence of edges that goes ``downwards'' in the height table. As we will observe in \cref{ObservationLexOrderImpliesGraphOrder}, this sequence of edges is $\ordeqE$-increasing, but because vertices may appear multiple times in this sequence, we cannot guarantee that it gives an increasing path. Still, this kind of exploration going downwards in the height table is an important idea that will appear in our proof of \cref{thm:altitude-d}.

A crucial fact about height tables is that if we pass to a subgraph of $G$ (without changing the vertex or edge orderings) and rebuild the height table, then there are some edges of $G$ whose height does not decrease too much. Specifically, one of the main results of this section will be the following.

\begin{lemma}\label{LemmaEdgeDrop}
Let $G$ be an ordered graph, and let $S, T\subseteq E(G)$ with $|S|> |T|$. Then there is an edge 
$e\in S\setminus T$ with 
$$\hh e{G\setminus T}\geq \min_{f\in S} \hh{f}{G}.$$
\end{lemma}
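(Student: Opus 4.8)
To prove \cref{LemmaEdgeDrop} I would reduce it to the case of deleting a single edge and then study that case by overlaying the two height tables.

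Put $m := \min_{f\in S}\hh{f}{G}$ and, for an ordered graph $H$, let $X(H) := \{e\in E(H):\hh{e}{H}\ge m\}$, so that $S\subseteq X(G)$. It suffices to prove that for every ordered graph $H$ and every edge $g\in E(H)$,
\[
|X(H)\setminus X(H\setminus g)|\le 1.\qquad(\star)
\]
Granting $(\star)$: enumerate $T=\{g_1,\dots,g_t\}$, set $H_0:=G$ and $H_j:=H_{j-1}\setminus g_j$ (so $H_t=G\setminus T$), and let $a_j:=|S\cap X(H_j)|$. Since $S\cap X(H_j)\supseteq(S\cap X(H_{j-1}))\setminus(X(H_{j-1})\setminus X(H_j))$, the bound $(\star)$ gives $a_j\ge a_{j-1}-1$, so $a_t\ge|S|-t=|S|-|T|\ge 1$; any edge counted by $a_t$ lies in $(E(G)\setminus T)\cap S=S\setminus T$ and has height at least $m$ in $G\setminus T$, which is exactly what is wanted.

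For $(\star)$, build $\HT(H)$ and $\HT(H\setminus g)$ in parallel; both visit the cells $\N\times V(H)$ in the order $\llex$, and I write $\HT(H)_c$ for the content of cell $c$. Let $c_0$ be the cell of $\HT(H)$ containing $g$. One first checks that the two tables agree on every cell $\llex$-before $c_0$: at such a cell the entry is the $\ordeqE$-largest unplaced edge at the relevant vertex, and this is never $g$ (otherwise $g$ would lie there rather than at $c_0$), so deleting $g$ changes none of these choices, and the two constructions have placed the same set of edges just before reaching $c_0$. From $c_0$ on, the two constructions carry a single ``defect'': just after $c_0$, the set of edges placed by $\HT(H\setminus g)$ equals that of $\HT(H)$ with $g$ deleted and one extra edge $d$ added, where $d$ is the $\ordeqE$-second-largest available edge in the column of $c_0$ (or, if no such $d$ exists, the two placed sets already differ only in $g$). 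As the construction proceeds, at a cell where $\HT(H)$ does \emph{not} place the current extra edge $d$ both tables agree and the defect is unchanged; at the (later) cell $c$ where $\HT(H)$ does place $d$, table $\HT(H\setminus g)$ places the next available edge $d'$ instead, the defect moves to $d'$, and $c$ joins a chain $c_0\llex c_1\llex\dots$; if no valid $d'$ exists the two placed sets differ only in $g$, and one checks the tables then agree forever after. Hence the chain is finite, $c_0\llex\dots\llex c_\ell$, with $\HT(H\setminus g)_{c_s}=\HT(H)_{c_{s+1}}$ for $0\le s<\ell$, with $\HT(H\setminus g)_{c_\ell}$ blank, and with the tables identical elsewhere. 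Consequently the only edges that move are $g$ (which vanishes) and $f_s:=\HT(H)_{c_s}$ for $1\le s\le\ell$, and $f_s$ drops from the row of $c_s$ to the no-larger row of $c_{s-1}$. Since the rows of $c_0\llex\dots\llex c_\ell$ are nondecreasing they cross the level $m$ at most once, so at most one $f_s$ leaves $X(H)$; moreover if $g\in X(H)$ (i.e.\ the row of $c_0$ is at least $m$) no such crossing occurs. In every case $|X(H)\setminus X(H\setminus g)|\le 1$, proving $(\star)$. (As a byproduct this shows heights never increase under edge deletion, which is the flavour of the other perturbation lemmas of this section.)

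The step I expect to be the main obstacle is making the ``single defect'' picture rigorous. The subtlety is that deleting $g$ can set off a cascade of re-placements that jumps between columns — this happens when the current extra edge $d$ is finally placed by $\HT(H)$ in the column of its \emph{other} endpoint — so one has to argue carefully that the defect is propagated exactly one cell at a time, never branches into two, and genuinely terminates in the ``resynchronised'' state.
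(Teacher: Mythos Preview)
Your proof is correct and takes a genuinely different route from the paper. The paper compares $\HT(G)$ and $\HT(G\setminus T)$ directly: it builds an auxiliary digraph $D$ on $\N\times V(G)$ with an arc from $\htf(e)$ to $\htt(e)$ for every $e\in E(G\setminus T)$, observes that $D$ is a path forest in which every nontrivial path must end at a cell of the form $\htf(t)$ for some $t\in T$, and then argues by pigeonhole that some $e\in S$ lies on a path that keeps it above the threshold $m$. Your argument instead deletes the edges of $T$ one at a time and shows that a single deletion perturbs the height table along a single $\llex$-monotone chain of cells, so that at most one edge can fall below level $m$ per step. The two pictures are closely related --- your ``chain'' is precisely the unique path of the paper's digraph in the special case $|T|=1$ --- but the paper's approach handles all of $T$ in one shot and packages the combinatorics into the in/out-degree~$\le 1$ structure of $D$, whereas yours is more elementary, gives a very transparent description of how the height table responds to removing one edge, and (as you note) recovers \cref{LemmaSubgraphMonotonicity} as a byproduct. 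The ``single defect'' invariant you flag as the delicate point is indeed sound: the key observation your case analysis establishes is that at every stage the unplaced sets of the two parallel constructions differ in at most one edge, and this is preserved regardless of whether the current column contains an endpoint of the defect edge, so the cascade never branches.
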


We remark that this lemma has the same flavour as a result that appeared in~\cite{mil17}. It was shown there that if a small set of \emph{vertices} is deleted and the height table is rebuilt, then one can bound the decrease in height of \emph{every} edge. However, the result in~\cite{mil17} does not appear to be powerful enough for the approach in our paper, and our proof of \cref{LemmaEdgeDrop} seems to be quite different.

\cref{LemmaEdgeDrop} will be used to prove the following lemma, which shows that for any high edge $e$ and set of vertices $U$, there is a short increasing path starting from $e$ which ends at an edge which is high with respect to the height table of $G-U:=G[V(G)\setminus U]$. 

\begin{lemma}\label{LemmaLength3PathDrop}
Let $G$ be an ordered graph, and let $U\subseteq V(G)$. Consider an edge $xy \in E(G)$ with $\hh{xy}{G}> 4m+3$ for some real number $m$ satisfying $m\geq |U|$ and $m^2/2> \Delta(G)|U|$. 
Then there are vertices 
$z,w \in V(G)\setminus U$ such that $xyzw$ is an increasing path and 
$$\hh{zw}{G-U}\geq \hh{xy}{G}-4m-3.$$
\end{lemma}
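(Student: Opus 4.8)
The plan is to build a large family $S$ of edges, each of which can serve as the last edge of an increasing path $xyzw$ in $G$ and each of which still lies high in $\HT(G)$, and then to invoke \cref{LemmaEdgeDrop} with $T$ equal to the set of \emph{all} edges of $G$ incident to $U$. Three easy observations make this strategy go through: $G\setminus T$ is simply $G-U$ together with $|U|$ additional isolated vertices, and isolated vertices do not affect the rest of the height table, so $\hh{e}{G\setminus T}=\hh{e}{G-U}$ for every edge $e$ avoiding $U$; any $e\in S\setminus T$ has both its endpoints outside $U$; and $|T|\le\Delta(G)|U|<m^2/2$. Hence, if $S$ can be chosen with $|S|>m^2/2$ and with every edge of $S$ at height at least $\hh{xy}{G}-4m-3$ in $\HT(G)$, then \cref{LemmaEdgeDrop} returns an edge $zw\in S\setminus T$ with $\hh{zw}{G-U}=\hh{zw}{G\setminus T}\ge\min_{f\in S}\hh{f}{G}\ge\hh{xy}{G}-4m-3$, and the increasing path $xyzw$ witnessing $zw\in S$ is precisely what is required.

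The heart of the construction is a two-level downward exploration of $\HT(G)$. Put $h=\hh{xy}{G}$ and $r=\lceil 2m\rceil$; from $h>4m+3$ one gets $h-2r\ge 1$, so every cell mentioned below is nonblank. Working from the definition of the height table together with \cref{ObservationNonEmptyBelowEntry,ObservationLexOrderImpliesGraphOrder}, one checks that --- regardless of which endpoint of $xy$ is its column --- for each $1\le j\le h-1$ the cell $\HT(G)_{j,y}$ holds an edge $yz_j$ with $xy\ordE yz_j$, the $z_j$ being distinct neighbours of $y$; and that for each such $j$ and each $j-r\le k\le j-1$ the cell $\HT(G)_{k,z_j}$ holds an edge $z_jw_{j,k}$ with $yz_j\ordE z_jw_{j,k}$. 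Consequently, for every $j\in\{h-r,\dots,h-1\}$ and $k\in\{j-r,\dots,j-1\}$ the edges $xy,\ yz_j,\ z_jw_{j,k}$ are strictly increasing and form a walk, which is moreover a genuine path unless $w_{j,k}=x$ (the remaining possible coincidences among $x,y,z_j,w_{j,k}$ are ruled out by the strict inequalities). Since each edge of $G$ occupies a unique cell of $\HT(G)$, all the edges $z_jw_{j,k}$ are pairwise distinct, and for each $j$ at most one $k$ can give $w_{j,k}=x$; after discarding those, one is left with a family $S$ of at least $r^2-r$ edges, each the last edge of an increasing path $xyz_jw_{j,k}$ and each of $\HT(G)$-height $k\ge h-2r\ge\hh{xy}{G}-4m-3$. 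Finally $r^2-r>m^2/2$ as soon as $m\ge 1$, while if $m<1$ then $\Delta(G)|U|<m^2/2<1/2$ forces $U=\emptyset$, in which case $T=\emptyset$ and it suffices to exhibit a single such path, which is routine.

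Feeding $S$ and $T$ into \cref{LemmaEdgeDrop} and unwinding the reductions of the first paragraph then finishes the proof, taking $z=z_j$ and $w=w_{j,k}$ for the returned edge $zw=z_jw_{j,k}$.

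The step that needs genuine care is the middle paragraph, and its structure is essentially forced. A single downward chain in $\HT(G)$ yields only $O(m)$ candidate edges within a height budget of $4m$ --- far too few to beat the up to $\approx m^2$ edges that might meet $U$ --- whereas branching $\approx 2m$ ways at each of two consecutive steps produces $\approx m^2$ candidates, which is exactly what the hypothesis $m^2/2>\Delta(G)|U|$ calls for. The facts one must check with some care are: (i) that neither of the two explorations runs off the bottom of the table, which is precisely what the assumption $\hh{xy}{G}>4m+3$ buys; (ii) that the $\approx m^2$ harvested edges are pairwise distinct, which reduces to the elementary fact that each edge of $G$ sits in exactly one cell of the height table; and (iii) that enough of the resulting walks are honest paths, which costs only the handful of branches that return to $x$.
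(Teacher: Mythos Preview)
Your argument is correct and follows the same overall strategy as the paper: a two-level downward branching in the height table from $xy$, collecting the second-level edges into $S$, setting $T$ to be all edges meeting $U$, and invoking \cref{LemmaEdgeDrop}. The differences are purely in bookkeeping. The paper prunes $U$ at each branching step (using the hypothesis $m\ge|U|$ to retain at least $m$ branches per level) and then halves its count to account for a possible double-count of an edge $zw$ arising from both $z\in Z,\,w\in W_z$ and $w\in Z,\,z\in W_w$; you instead keep all branches and observe that the $r^2$ harvested edges sit in pairwise distinct height-table cells $(k,z_j)$, so no double-counting occurs, and you let $S\setminus T$ do the work of excluding $U$. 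Your treatment of the degenerate case $m<1$ (forcing $U=\emptyset$) is also sound. Either bookkeeping yields $|S|>|T|$ and the same height bound, so the two proofs are essentially interchangeable.
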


We will prove \cref{LemmaLength3PathDrop} later in this section, but the rough idea is that we will greedily find a large number of length-3 increasing paths $xyzw$ extending $xy$, and we will apply \cref{LemmaEdgeDrop} with $S$ as the set of all such $zw$ and $T$ as the set of edges touching $U$. The purpose of \cref{LemmaLength3PathDrop} is that it will allow us to build a long increasing path in an iterative fashion, as follows. If we can find a reasonably long increasing path $P$ among the top few rows of the height table (ending at some edge $xy$), then by \cref{LemmaLength3PathDrop} we can find a continuation $xyzw$ of this path such that after deleting $P$, the edge $zw$ is still near the top of the height table, and it remains to find an increasing path from $zw$.

\mk{I moved the statements of \cref{LemmaEdgeDrop} and \cref{LemmaLength3PathDrop} to the start of the section so that the reader can immediately see what the point is. OK?}

Before proving \cref{LemmaEdgeDrop,LemmaLength3PathDrop}, we make a number of basic observations about height tables. First, the following observation allows us to go between the two orders $\ordE$ and $\llex$.
\begin{observation}\label{ObservationLexOrderImpliesGraphOrder}
Let $G$ be an ordered graph, and consider edges $e,f \in E(G)$ both containing the vertex $\vv f G$, and satsifying $\htf(f) \llex \htf(e)$. Then $e\ordE f$. \mk{I rephrased this statement, I thought it sounded pretty awkward before}
\end{observation}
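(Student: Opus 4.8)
The plan is to argue directly from the construction of $\HT(G)$, unwinding the definition cell by cell. Write $v=\vv f G$ for the column of $\HT(G)$ containing $f$; by hypothesis both $e$ and $f$ are edges incident to $v$, and $\htf(f)=(\hh f G,v)$ while $\htf(e)=(\hh e G,\vv e G)$. The point is simply that, because $\htf(f)\llex\htf(e)$, the cell $\htf(f)$ is processed strictly earlier than the cell $\htf(e)$ during the construction, and this earlier cell lies in column $v$, which contains $e$ as well.

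The key steps would then be as follows. First, since $\htf(f)\llex\htf(e)$ is a strict inequality and every edge of $G$ occupies exactly one cell of $\HT(G)$, we have $e\neq f$. Second, $e$ is entered into $\HT(G)$ precisely at the cell $\htf(e)$, which is processed after $\htf(f)$; hence at the moment the construction processes the cell $\htf(f)$, the edge $e$ has not yet been entered. Third, $e$ contains the vertex $v$, so at that moment $e$ is one of the candidate edges for the cell $(\hh f G,v)$. By definition of the height table, this cell is filled with the $\ordeqE$-largest edge containing $v$ that has not yet been entered, and that edge is $f$; therefore $e\ordeqE f$. Combining $e\ordeqE f$ with $e\neq f$ yields $e\ordE f$, as desired.

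The only step that requires any care — and it is the closest thing to an ``obstacle'' — is the claim that $e$ is still available (not yet entered) at the time $f$ is placed. This is justified by the two basic facts recorded just after the definition of $\HT(G)$: every edge is entered exactly once, and the cells are processed in increasing $\leqlex$-order; together with $\htf(f)\llex\htf(e)$ these force $e$'s unique appearance to occur strictly after cell $\htf(f)$ is filled. Beyond this bookkeeping the statement is an immediate consequence of the greedy ``largest-first'' rule defining the height table, so no further ideas are needed.
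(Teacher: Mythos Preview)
Your proposal is correct and follows exactly the same idea as the paper's proof: at the moment the cell $\htf(f)$ is filled, the edge $e$ is still available (since it is entered later at $\htf(e)\glex\htf(f)$) and contains $v=\vv f G$, so the greedy largest-first rule forces $e\ordE f$. The paper compresses this into two sentences, while you have spelled out the bookkeeping (in particular the $e\neq f$ step) more carefully, but the argument is identical.
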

\begin{proof}
When we chose to put $f$ at position $\htf(f)$, the edge $e$ was still available (as it got assigned to $\htf(e) \glex \htf(f)$). So, we must have had $e\ordE f.$
\end{proof}

The next lemma shows that every dense graph has a high edge. 
\begin{lemma}\label{LemmaHighEdgeInDenseGraph}
Every ordered graph $G$ has an edge $e$ with $\hh e G\geq  \d(G)/2$.
\end{lemma}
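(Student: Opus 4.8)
The plan is a one-line double-counting argument on the rows of the height table. The two facts I would use are both immediate from the construction of $\HT(G)$: every edge of $G$ is entered into $\HT(G)$ exactly once, and each row $\{i\}\times V(G)$ of the table has exactly $|G|$ cells, hence holds at most $|G|$ edges (blank cells only decrease this count).

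First I would set $r=\max_{e\in E(G)}\hh{e}{G}$, which is well-defined and finite since $G$ has finitely many edges and each is placed in some row. All $|E(G)|$ edges then lie in rows $1,\dots,r$, and since each such row contains at most $|G|$ edges we get $|E(G)|\le r\,|G|$. Rearranging gives $r\ge |E(G)|/|G|=\d(G)/2$, so any edge $e$ achieving $\hh{e}{G}=r$ satisfies $\hh{e}{G}\ge\d(G)/2$, as required. (Equivalently, one could argue by contradiction: if every edge had height strictly below $\d(G)/2$, all edges would fit into fewer than $\d(G)/2$ rows, forcing $|E(G)|<(\d(G)/2)\,|G|=|E(G)|$.)

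There is essentially no obstacle here — the statement is a pigeonhole observation once one has unpacked the definition of a height table — so the only things worth spelling out are that each edge appears exactly once and that a row of $\HT(G)$ has at most $|G|$ entries.
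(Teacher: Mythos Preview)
Your proof is correct and is essentially the same pigeonhole argument as the paper's: the paper pigeonholes the $|E(G)|$ entries into the $|V(G)|$ columns to find a column with at least $|E(G)|/|V(G)|$ edges (hence an edge at height at least $\d(G)/2$), while you pigeonhole into rows of size $|V(G)|$ to bound the maximum height directly. Both are the same double-counting observation on the height table.
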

\begin{proof}
As every edge gets entered exactly once in $\HT(G)$ we have $|\{\htf(e): e\in E(G)\}|=|E(G)|$.
As there are $|V(G)|$ columns this means there needs to be a column with at least $|E(G)|/|V(G)|$ edges, so one of these edges has height at least $ |E(G)|/|V(G)| \ge\d(G)/2,$ as required. 
\end{proof}
\mk{I changed $d(G)$ to $\bar d(G)$ to make it completely clear that it means average degree, OK?}

The following two observations show that if some location in the height table is nonempty, then all the locations below it are non-empty as well. 
\begin{observation}\label{ObservationNonEmptyBelowEntry}
Let $G$ be an ordered graph, consider an edge $xy\in E(G)$. If $\htf(xy) \glex (i,x)$, then there is an edge $xz$ with $\htf(xz)=(i,x)$.
\end{observation}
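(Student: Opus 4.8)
The plan is to unwind the definitions directly, with no real machinery involved. The hypothesis $\htf(xy) \glex (i,x)$ says precisely that, in the $\leqlex$-ordered sweep used to build $\HT(G)$, the cell $(i,x)$ is visited strictly before the cell that eventually receives $xy$. So the first step is to record the obvious but crucial fact that $xy$ is an edge \emph{containing the vertex $x$}, and hence it is one of the edges eligible to be placed in column $x$; in particular, at the moment the construction reaches position $(i,x)$, the edge $xy$ has not yet been entered into $\HT(G)$.

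The second step is to invoke the rule defining $\HT(G)_{i,x}$: by construction, the cell $(i,x)$ is left blank only if, at the time it is processed, \emph{every} edge containing $x$ has already been entered into $\HT(G)$. Since we just argued that $xy$ is an edge containing $x$ which has not yet been entered at that time, this blank case cannot occur. Therefore $\HT(G)_{i,x}$ is filled with some edge, and every edge placed in column $x$ of the height table must contain $x$, so that edge has the form $xz$ for some $z$. By definition of $\htf$ this gives $\htf(xz) = (i,x)$, which is exactly what we want.

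Since the whole argument is definitional, I do not expect any genuine obstacle. The one point that requires care — and the reason to spell it out rather than call it ``clear'' — is the direction of the two orders: $\leqlex$ runs roughly \emph{opposite} to $\ordeqE$, so one must be careful that $\glex$ in the hypothesis is being read as ``the cell $(i,x)$ is processed \emph{earlier} than $\htf(xy)$'' and not later. Once that translation is fixed, the conclusion is immediate from the construction rule, and the proof is complete.
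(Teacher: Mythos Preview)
Your proof is correct and follows essentially the same approach as the paper's: both argue that since the sweep reaches $(i,x)$ before the position $\htf(xy)$, the edge $xy$ (which contains $x$) has not yet been entered, so the cell $(i,x)$ cannot be blank. The only cosmetic difference is that the paper phrases this by contradiction while you argue directly.
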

\begin{proof}
Let us assume for the sake of contradiction that the entry $(i,x)$ is empty. This would mean that when we were constructing $\HT(G)$, and reached the position $(i,x)$, there were no remaining edges containing $x$. Specifically, $xy$ must have already been entered into the height table, meaning $(i,x) \glex \htf(xy)$, a contradiction.
\end{proof}

\begin{observation}\label{ObservationNonEmptyBelowRow}
Let $G$ be an ordered graph, and consider an edge $xy\in E(G)$. If $\hh{xy}G > i$, then there is an edge $xz$ with $\hh{xz}G = i$ and $\vv{xz}G=x$.
\end{observation}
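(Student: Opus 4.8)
The plan is to deduce this immediately from \cref{ObservationNonEmptyBelowEntry}. That earlier observation says: if $\htf(xy) \glex (i,x)$, then there is an edge $xz$ with $\htf(xz) = (i,x)$. So it suffices to check the hypothesis $\htf(xy) \glex (i,x)$ under the assumption $\hh{xy}{G} > i$.

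First I would unpack the notation: by definition $\htf(xy) = (\hh{xy}{G}, \vv{xy}{G})$, and $\llex$ is the lexicographic order on $\N \times V(G)$ in which the first (row) coordinate is compared first. Since $\hh{xy}{G} > i$, the first coordinate of $\htf(xy)$ strictly exceeds the first coordinate of $(i,x)$, so $\htf(xy) \glex (i,x)$ regardless of the vertex coordinates $\vv{xy}{G}$ and $x$. This is the only place the hypothesis $\hh{xy}{G} > i$ is used.

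Then I would simply invoke \cref{ObservationNonEmptyBelowEntry} with this $(i,x)$, obtaining an edge $xz$ with $\htf(xz) = (i,x)$; reading off the two coordinates gives $\hh{xz}{G} = i$ and $\vv{xz}{G} = x$, as required.

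There is essentially no obstacle here — the statement is a one-line corollary of \cref{ObservationNonEmptyBelowEntry}, the only content being the trivial remark that a strict inequality in the row coordinate forces the lexicographic inequality irrespective of the column. (One might also note for safety that $i \ge 1$, since $\hh{xy}{G} \ge 1$ and $\hh{xy}{G} > i$ would be vacuous otherwise — but in fact the argument does not even need this, as \cref{ObservationNonEmptyBelowEntry} is stated for general indices.)
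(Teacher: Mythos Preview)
Your proposal is correct and follows exactly the paper's approach: observe that $\hh{xy}{G}>i$ forces $\htf(xy)\glex(i,x)$ by the definition of the lexicographic order, then apply \cref{ObservationNonEmptyBelowEntry}. The paper's proof is just a terser rendition of precisely this argument.
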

\begin{proof}
Notice that $\htf(xy) \glex (i,x)$ by the definition of $\glex$. The observation follows from \cref{ObservationNonEmptyBelowEntry}.
\end{proof}

The following lemma shows that if we pass to a spanning subgraph of $G$, and we rebuild the height table, then no edge can increase in height.
\begin{lemma}\label{LemmaSubgraphMonotonicity}
Let $G$ be an ordered graph and let $H$ be a spanning subgraph of $G$. Then for any edge $e\in E(H)$ we have $\hth(e) \leqlex \htf(e)$.
\end{lemma}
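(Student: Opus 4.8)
The plan is to prove this by comparing the two height-table constructions step by step as we run through the lexicographic order $\leqlex$ on $\N \times V(G)$. The key intermediate claim I would isolate is the following monotonicity statement: for every position $(i,v) \in \N \times V(G)$, the set of edges of $H$ that have already been entered into $\HT(H)$ by the time we process $(i,v)$ \emph{contains} the set of edges of $H$ that have already been entered into $\HT(G)$ by the time we process $(i,v)$. In other words, $H$'s construction is always "at least as far along" on the edges of $H$. Once this claim is established, the lemma follows immediately: when $\HT(G)$ enters an edge $e \in E(H)$ at position $\htf(e)$, the claim (applied at the position just before $\htf(e)$, or directly at $\htf(e)$) tells us $e$ was either already entered into $\HT(H)$ at an earlier position, or is about to be entered — so $\hth(e) \leqlex \htf(e)$.

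To prove the claim I would induct on $(i,v)$ in the $\leqlex$ order. Suppose it holds for all positions $\leqlex$-smaller than $(i,v)$; we check it remains true after processing $(i,v)$ in both constructions. In the $\HT(G)$ construction, $\HT(G)_{i,v}$ is the $\ordeqE$-largest edge of $G$ at $v$ not yet entered into $\HT(G)$ (or blank). If this edge does not lie in $E(H)$, or the position is blank, the set of $H$-edges entered into $\HT(G)$ does not change, and we are done by the inductive hypothesis. So suppose $\HT(G)_{i,v} = e \in E(H)$. We must show $e$ has been entered into $\HT(H)$ by the time we finish processing $(i,v)$. If it was entered at an earlier position, we are done. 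Otherwise, at position $(i,v)$ in the $\HT(H)$ construction, $e$ is still available, so $\HT(H)_{i,v}$ is the $\ordeqE$-largest available edge of $H$ at $v$, which is $\ordeqE e$. I claim it equals $e$: any edge $f \in E(H)$ at $v$ with $f \ordE e$ that is still unentered in $\HT(H)$ at position $(i,v)$ would, by the inductive hypothesis, also be unentered in $\HT(G)$ at position $(i,v)$ — but then $\HT(G)$ would have preferred $f$ over $e$ at $(i,v)$, since $f \ordE e$ contradicts $f \succ_{E}$... wait, we need the $\ordeqE$-\emph{largest}, so in fact the subtlety is in the other direction: I need that every $H$-edge $f$ at $v$ with $f \succ_{\ordeqE} e$ is \emph{already} entered in $\HT(H)$ at $(i,v)$. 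Since $\HT(G)$ chose $e$ (not $f$) at $(i,v)$, such an $f$ was already entered in $\HT(G)$ by $(i,v)$; by the inductive hypothesis it is then already entered in $\HT(H)$ as well. Hence $\HT(H)_{i,v} = e$, so $e$ is entered into $\HT(H)$ at exactly position $(i,v)$, completing the inductive step.

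The main obstacle, and the point where care is genuinely needed, is exactly this "greedy choice commutes with the inductive hypothesis" argument in the previous paragraph: one has to correctly track that the inductive hypothesis controls $H$-edges that are \emph{unentered in $\HT(G)$} and conclude they are unentered in $\HT(H)$, and separately handle the $\succ_{\ordeqE}$-larger $H$-edges at $v$ using the fact that $\HT(G)$ passed them over. Everything else is bookkeeping about the $\leqlex$ order. An alternative, perhaps cleaner, packaging is to prove directly by $\leqlex$-induction the statement "$\hth(e) \leqlex \htf(e)$ for every $e \in E(H)$ with $\htf(e) \leqlex (i,v)$", using \cref{ObservationLexOrderImpliesGraphOrder} to compare the $\ordeqE$-order of competing edges; I would write whichever of these two formulations turns out to read more smoothly, but both rest on the same core observation.
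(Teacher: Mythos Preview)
Your proof is correct. The inductive invariant you set up --- that at every stage of the two parallel constructions, every $H$-edge already placed in $\HT(G)$ has also already been placed in $\HT(H)$ --- is exactly the right thing to track, and your inductive step (after the self-correction) is sound: if $\HT(G)_{i,v}=e\in E(H)$ and $e$ is still unplaced in $\HT(H)$, then every $H$-edge $f\ni v$ with $f\ordE e$ is irrelevant, and every $H$-edge $f\ni v$ with $f\succ_G e$ was already placed in $\HT(G)$ (else $G$ would have chosen it over $e$), hence by the inductive hypothesis already placed in $\HT(H)$; so $\HT(H)_{i,v}=e$.

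The paper's proof is organised differently. Rather than a forward induction tracking an invariant, it argues by extremal counterexample: take the $\ordeqE$-largest edge $e$ with $\hth(e)\glex\htf(e)$, use \cref{ObservationNonEmptyBelowEntry} in $H$ to find an edge $f$ with $\hth(f)=\htf(e)$, and then apply \cref{ObservationLexOrderImpliesGraphOrder} (once in $H$, once contrapositively in $G$) to deduce that $f$ is a strictly $\ordeqE$-larger counterexample, a contradiction. This is shorter and leans directly on the two observations already proved, whereas your argument essentially re-derives those observations inside the induction. On the other hand, your approach makes the ``$H$ is always at least as far along'' intuition completely explicit, which some readers may find more transparent. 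Both are valid; the paper's version is the more economical write-up.
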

\begin{proof}
Suppose, for the sake of contradiction, that the lemma is false, and let $e\in E(H)$ be the $\ordeqE$-largest edge with $\hth(e) \glex \htf(e)$.
Let $\vv e G=v$ and $e=vy$. 
By 
\cref{ObservationNonEmptyBelowEntry} applied to $H$ (with $i=\hh e G$, $x=v$ and $xy=e$), there is an edge $f\in E(H)$ with $\hth(f)=\htf(e)=(\hh{vy}G, v)$.

Since  $\vv f H=v=f\cap e$ and $\hth(f)\llex \hth(e)$, \cref{ObservationLexOrderImpliesGraphOrder} implies that $e\ordE f$.
Since $\vv e G=v=f\cap e$ and $e<_G f$, the contrapositive of \cref{ObservationLexOrderImpliesGraphOrder} implies that $\htf(e)\glex \htf(f)$. 
Thus, we have $e<_G f$ and $\hth(f)=\htf(e)\glex \htf(f)$, contradicting the maximality of $e$. 
\end{proof}

We are now ready to prove \cref{LemmaEdgeDrop}.

\begin{proof}[Proof of \cref{LemmaEdgeDrop}]
Construct an auxiliary digraph $D$ on $\mathbb{N}\times V(G)$ by placing directed edges from $\htf(e)$ to $\htt(e)$ for all $e \in G\setminus T$. Delete all loops in $D$.

Since every edge appears at most once in $\HT(G)$ and $\HT(G \setminus T)$, each vertex in $D$ has in-degree and out-degree at most 1. This implies that $D$ is a union of vertex-disjoint directed paths and cycles.

\cref{LemmaSubgraphMonotonicity} applied with $H = G\setminus T$ implies that if $((i,x),(j,y))$ is a directed  edge of $D$, then $(j,y)\llex (i,x)$. By transitivity of $\llex$ this implies that 
\begin{equation}\label{EqPathMonotonicity}
\text{If there is a directed path $P$ from $(i,x)$ to $(j,y)$ in $D$ then $(j,y)\llex (i,x)$.}
\end{equation}
In particular, this shows that $D$ is acyclic and so a union of directed paths. The following claim further characterises these paths.

\begin{claim}
If $d^-((i,x))=1$ for $(i,x)\in \mathbb{N}\times V(G)$ then either $d^+((i,x))=1$ or $(i,x)=\htf(t)$ for some $t\in T$.
\end{claim}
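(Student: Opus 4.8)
The plan is to look at what occupies the cell $(i,x)$ of the \emph{original} height table $\HT(G)$ and to split into cases accordingly. Fix an edge $e\in G\setminus T$ with $\htt(e)=(i,x)$; since $(i,x)$ has in-degree $1$ in $D$ and loops were deleted from $D$, this edge satisfies $\htf(e)\neq(i,x)$. Two properties of $e$ will do all the work: it contains $x$ (being in column $x$ of $\HT(G\setminus T)$), and $(i,x)\llex\htf(e)$ --- the latter because \cref{LemmaSubgraphMonotonicity} applied with $H=G\setminus T$ gives $(i,x)=\htt(e)\leqlex\htf(e)$, and the two positions are distinct.

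First I would show the cell $(i,x)$ is non-empty in $\HT(G)$. This is immediate from the construction of the height table: when that construction reaches position $(i,x)$, the edge $e$ has not yet been placed (it is placed only at the strictly later position $\htf(e)\glex(i,x)$) and $e$ contains $x$, so some edge at $x$ is still available and the cell receives an entry. Let $g$ be that edge, i.e.\ $\htf(g)=(i,x)$. If $g\in T$ we are done, taking $t=g$. Otherwise $g\in G\setminus T$, and I would look at $\htt(g)$: if $\htt(g)\neq(i,x)$ then $D$ contains the edge $\htf(g)\to\htt(g)$, so $d^+((i,x))=1$ and we are done; while if $\htt(g)=(i,x)$ then $g$ and $e$ both occupy the cell $(i,x)$ of $\HT(G\setminus T)$, forcing $g=e$ and hence contradicting $\htf(e)\neq(i,x)=\htf(g)$. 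So in each surviving case the claim holds.

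I expect the only step needing care to be the non-emptiness of $(i,x)$ in $\HT(G)$, and within it the point that one should argue directly from the construction rather than via any degree count: the single inequality $(i,x)\llex\htf(e)$ furnished by \cref{LemmaSubgraphMonotonicity} is precisely what guarantees $e$ is still available when the construction reaches position $(i,x)$. Everything else is a short three-way case check, the only twist being that the ``loop'' case $\htt(g)=(i,x)$ is incompatible with $d^-((i,x))=1$.
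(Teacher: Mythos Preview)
Your proof is correct and essentially identical to the paper's own argument. The paper also fixes $e$ with $\htt(e)=(i,x)\ne\htf(e)$, invokes \cref{LemmaSubgraphMonotonicity} to get $(i,x)\llex\htf(e)$, then cites \cref{ObservationNonEmptyBelowEntry} (which you have simply unfolded inline) to produce the edge $g$ with $\htf(g)=(i,x)$, and finishes with the same case split on whether $g\in T$ and whether $\htt(g)=(i,x)$.
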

\mk{would this read better if it was just part of the proof, not a nested claim?}
\begin{proof}
Suppose that we have a directed edge from $\htf(e)$ to some $(i,x)=\htt(e)\neq \htf(e)$ in $D$. 
Then we have $(i,x)\llex \htf(e)$ by \cref{LemmaSubgraphMonotonicity}. By \cref{ObservationNonEmptyBelowEntry} there is an edge $f\in E(G)$ with $(i,x)= \htf(f)$ (for this application of \cref{ObservationNonEmptyBelowEntry}, we use $x\in e$ coming from $(i,x)=\htt(e)$). If $f\in T$ then we are done.
Otherwise, $\htt(f)$ is well-defined. Note that $e \neq f$, because $\htf(f)=(i,x) \neq \htf(e)$. Therefore, we have $\htt(f)\neq \htt(e)=(i,x)$. It follows that $d^+((i,x))=d^+(\htf(f))=1$.
\end{proof}

We now return to the proof of \cref{LemmaEdgeDrop}. From the above claim it follows that the number of vertices $(i,x)$ in $D$ with $d^-((i,x))=1$ and $d^+((i,x))=0$ is at most $|T|$. Since every non-trivial path in $D$ ends in such a vertex, we conclude that $D$ is a path forest having at most $|T|$ paths which are either non-trivial or consist of a single vertex of $T$.
 
Suppose that there is a directed path from $\htf(e)$ to  $\htf(f)$ for distinct $e, f\in S.$ Then $\htt(e)$ is the second vertex on this path so $\htt(e) \geqlex \htf(f)$ by  \cref{EqPathMonotonicity}. By the definition of $\glex$ this implies $\hh{e}{G \setminus T} \geq \hh{f}{G} \ge \min_{g\in S} \hh{g}{G},$ so $e$ satisfies the desired condition. Otherwise, every path in $D$ contains at most one $\htf(e)$, for $e\in S$. Now, if $\htf(e)=\htt(e)$ for some $e \in S \setminus T$, then this $e$ satisfies the requirement of the lemma. Otherwise, each $e\in S \setminus T$ lies on some non-trivial path in $D$. This shows that there are at least $|S|$ paths in $D$ which are non-trivial or consist of a vertex of $T$, contradicting $|T|<|S|$.
\end{proof}

To prove \cref{LemmaLength3PathDrop}, we introduce another definition, regarding the possible vertices with which we can extend an increasing path. This definition will also be used later in the paper.
\begin{definition}
\label{def:Si}
Given an edge $xy$ of an ordered graph $G$, and any $i<\hh{xy}G$, we denote by $S_i(x,y)$ the set of vertices $z$ such that 
$\vv{yz}{G}=y$ and $\hh{xy}{G}-i \le \hh{yz}{G}<\hh{xy}{G}.$ (This set does not contain $x$ or $y$).
\end{definition}
Note that \cref{ObservationNonEmptyBelowRow} implies $|S_i(x,y)|=i$ and \cref{ObservationLexOrderImpliesGraphOrder} implies that for any $z\in S_i(x,y)$ the path $xyz$ is increasing. Now, we prove \cref{LemmaLength3PathDrop}.

\begin{proof}[Proof of \cref{LemmaLength3PathDrop}]
Let $Z=S_{\ceil{2m}}(x,y) \setminus U,$ so that $|Z|\ge 2m-m=m.$ For any $z \in Z$ let $W_z=S_{\ceil{2m+1}}(y,z) \setminus (U \cup \{x\}),$ and note that $|W_z|\ge m$ for all $z \in Z$. Here we need the fact that $\hh{xy}{G}> 4m+3$ so that $S_{\ceil{2m}}(x,y)$ and $S_{\ceil{2m+1}}(y,z)$ are well-defined.

Let $S=\{zw: z\in Z,w \in W_z\}$, which is a set of at least $m^2/2$ edges disjoint from $U$ (we divide by 2 because an edge $zw\in S$ may arise from both $z\in Z,w\in W_z$ and from $w\in Z,z\in W_w$). Let $T$ be the set of all edges in $G$ touching $U$. Notice that we have $|S|\geq m^2/2>\Delta(G)|U|\geq |T|$. By \cref{LemmaEdgeDrop} there is an edge $zw\in S$ with 
$\hh{zw}{G\setminus T}\geq \min_{f\in S} \hh{f}{G}.$ This implies 
$$\hh{zw}{G-U}= \hh{zw}{G\setminus T}\geq \min_{f\in S} \hh{f}{G}\geq \hh{xy}{G}-4m-3.$$
By construction the path $xyzw$ is increasing, so satisfies the lemma.
\end{proof}

\section{Finding long increasing paths}
\label{sec:main-proof}

In this section we combine the tools developed in \cref{sec:regularisation,sec:height-table} to prove \cref{thm:altitude-d}, which implies \cref{thm:altitude-complete}. Actually, \cref{thm:altitude-d} will be a consequence of the following stronger result.

\begin{theorem}\label{thm:1-eps-regular}
The following holds for any integer $t\ge 1$. Let $G$ be an ordered graph with $n\ge 2$ vertices and 
consider an edge $e\in E(G)$ with $\hh{e}{G}> a$, for some real number $a>0$. Then there is an increasing path $P$ in $G$ starting with $e$
and having length at least
$$\frac{a^{1-1/t}}{(70\log n)^{2t}},$$
such that $\hh{f}{G}\ge \hh{e}{G}-a$ for every $f\in E(P)$.
\end{theorem}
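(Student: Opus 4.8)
The plan is to proceed by induction on $t$. For the base case $t=1$, the bound we need is $a^{0}/(70\log n)^{2} = 1/(70\log n)^2$, which is a constant less than $1$; since any single edge is an increasing path of length $1$ (or perhaps the convention gives length $0$ — in any case a trivial path through $e$ with every edge having height $\geq \hh{e}{G}-a$ exists because $e$ itself qualifies when $\hh{e}{G}>a$), the base case is immediate. So the real content is the inductive step.

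For the inductive step, assume the statement holds for $t-1$ and we want it for $t$. The idea is the iterative stitching approach flagged in the introduction: repeatedly apply the $(t-1)$-case to build a long increasing path in pieces, each piece living near the top of the height table of a progressively smaller subgraph. Concretely, I would set a ``block size'' $b$ (a power of $a^{1/t}$ roughly, so that $a^{1-1/t} \approx a^{1-1/(t-1)} \cdot (\text{number of blocks})$), and maintain an increasing path $P$ built so far, ending in an edge $e_j$, together with the set $U_j$ of vertices already used by $P$. At each step: (i) apply the $(t-1)$-case inside the current graph $G - U_j$ to the edge $e_j$ (which we will have arranged to be high in $\HT(G-U_j)$) with parameter roughly $b$, obtaining an increasing continuation of length $\gtrsim b^{1-1/(t-1)}/(70\log n)^{2(t-1)}$ all of whose edges stay within height $b$ of $e_j$; then (ii) use \cref{LemmaLength3PathDrop} to bridge from the endpoint of that continuation to a new edge $e_{j+1}$ which is again high — within an additive $O(m)$ — in the height table of the graph with the newly-used vertices also deleted, where $m$ is chosen $\geq |U_{j+1}|$ and with $m^2/2 > \Delta(G)|U_{j+1}|$. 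Iterating this for about $a^{1/t}$ blocks (until the total height drop, which is $O(b + m)$ per block, exhausts the budget $a$) yields total length $\gtrsim a^{1/t} \cdot b^{1-1/(t-1)}/(70\log n)^{2(t-1)}$, and choosing $b \approx a^{(t-1)/t}$ makes this $\approx a^{1-1/t}/(70\log n)^{2(t-1)}$, with room in the polylog factor to absorb losses.

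The main obstacle, and where the bookkeeping has to be done carefully, is controlling the interaction between three competing quantities: the total number of vertices used (which feeds into $|U|$ in \cref{LemmaLength3PathDrop} and hence requires $m^2 \gtrsim \Delta(G)|U|$), the degree bound $\Delta(G)$ (which is why the regularisation lemma was proved — we may first pass to an almost-regular subgraph so that $\Delta$ is comparable to $a$), and the height budget $a$ which is consumed both by the $(t-1)$-case applications (each costing up to $b$) and by the bridging steps (each costing $O(m)$). One needs $m$ large enough for \cref{LemmaLength3PathDrop} to apply yet small enough that the $O(m)$ drops over all blocks sum to $o(a)$; since $|U|$ grows with the path length, which is at most $\sim a$, and $\Delta(G) \sim a$, we need $m \gtrsim a$ at worst — so the count of blocks and the per-block parameters must be balanced so that the dominant height cost is the $b$-term, not the bridging term. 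I expect to resolve this by taking $m = \Theta(b)$ with $b$ a suitable power of $a$, checking that $b^2 \gtrsim \Delta(G) \cdot (\text{path length}) $ holds throughout, and verifying that the number of blocks times $(b + O(m)) \leq a$. The constant $70$ and the exponent $2t$ (rather than $2t-1$) on the logarithm are presumably chosen precisely to give slack for these estimates and for the ceilings in \cref{LemmaLength3PathDrop} and in \cref{def:Si}.
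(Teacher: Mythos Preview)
Your inductive scheme and parameter balancing are essentially what the paper does, but there is a genuine gap in how you intend to control $\Delta$. You write that ``we may first pass to an almost-regular subgraph so that $\Delta$ is comparable to $a$,'' but this step is not as innocent as you suggest, for two reasons. First, the only hypothesis is $\hh{e}{G}>a$; this tells you nothing about $\d(G)$ (a single vertex of high degree suffices to produce a high edge in the height table), so you have no graph of average degree $\Theta(a)$ to feed into the regularisation lemma. Second, and more seriously, even if you could regularise, the resulting subgraph $H$ need not contain $e$ or any edge incident to it, so your iteration ``start at $e_1=e$ and apply the $(t-1)$-case in $G-U_j$'' cannot be carried out inside $H$; and if you work in $G$ itself, then $\Delta(G)$ is uncontrolled and the hypothesis $m^2/2>\Delta(G)|U|$ of \cref{LemmaLength3PathDrop} may fail for any useful $m$.

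The paper resolves this with two additional lemmas that your plan omits. \cref{lemma:dense-graph-in neighbourhoods} exploits the height-table structure to find a subgraph $G'$ with $\d(G')\ge a/(21\log n)$ such that every edge of $G'$ is reachable from $e$ by a short increasing trail whose edges all have height $\ge \hh{e}{G}-a$; this simultaneously manufactures the high average degree needed for regularisation and records the connection back to $e$. One then regularises $G'$ to obtain $H$ with $\Delta(H)=O(a/(\log n)^2)$, runs exactly your iteration inside $H$ (starting from an arbitrary high edge of $H$, not from $e$), and finally uses \cref{lemma:join-trail-path} to splice the resulting long path onto the short trail back to $e$, at the cost of a factor $O(\log n)$ in length. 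Without this detour through $G'$ and the trail-to-path splicing, your plan does not produce a path starting at $e$.
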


Before proceeding to the proof of \cref{thm:1-eps-regular}, we briefly show how \cref{thm:altitude-d} may be deduced from it.

\begin{proof}[Proof of \cref{thm:altitude-d}]
Choose $a=\Omega(d)$ with $a<d/2$. By \cref{LemmaHighEdgeInDenseGraph}, there is an edge $e$ with $\hh{e}{G}> a$. Then, apply \cref{thm:1-eps-regular} with $t=\floor{\sqrt{\frac{\log a}{\log \log n}}}$. The desired result follows, noting that $a^{1/t},(70\log n)^{2t}=2^{O\left(\sqrt{\log d\log \log n}\right)}$.
\end{proof}

\mk{there used to be a more general (and more technical) corollary of \cref{thm:1-eps-regular} here instead, with explicit constants. There didn't seem to be any reason for it, so I removed it? Am I missing something?}

We will prove \cref{thm:1-eps-regular} by induction. Roughly speaking, for some suitable $m$, the idea is to use the induction hypothesis to find a reasonably long increasing path starting from $e$ in the $m$ rows of the height table just below $e$, then to delete the vertices in this path and use \cref{LemmaLength3PathDrop} to show that there is a potential continuation of our increasing path that is still near the top of the height table (the optimal value of $m$ is determined by the tradeoff between the induction hypothesis and \cref{LemmaLength3PathDrop}). We can then repeat this argument about $a/m$ times. An issue with this plan is that \cref{LemmaLength3PathDrop} is not effective for graphs with very large maximum degree, but we have no control over the maximum degree of the graphs that arise during our proof. We therefore use \cref{lem:regularisation} to find a subgraph whose maximum degree is not too large, and work inside this subgraph. Unfortunately, passing to this subgraph may involve deleting $e$ and edges incident to it, so we will need some lemmas to ensure that our final increasing path can be connected to $e$.

An \emph{increasing trail} in an edge-ordered graph is a trail (possibly with repeated vertices) with a specified direction, whose edges form an increasing sequence. Given a sufficiently high edge of an ordered graph $G$, the following lemma shows how to find a subgraph of $G$ with large average degree such that every edge of this subgraph can be reached with a short increasing trail starting at $e.$

\begin{lemma}\label{lemma:dense-graph-in neighbourhoods}
Let $G$ be an ordered graph with 
$|V(G)| \le n$, and let $e$ be an edge of $G$ with $\hh{e}{G}\ge 21 h \log n,$ for some 
real number $h\ge 1.$ Then one can find a subgraph $H \subseteq G$ with average degree $\d(H)\ge h,$ such that for each $f \in E(H)$ there is an increasing trail $T$ in $G$ with the following properties:
\begin{enumerate}
    \item $T$ starts with $e$ and ends with $f,$
    \item $T$ has length at most $2+\log n,$
    \item $\hh{g}{G} \ge \hh{e}{G}-7 h(\log n+2)$ for every edge $g\in E(T).$
\end{enumerate}
\end{lemma}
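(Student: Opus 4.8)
The plan is to explore the height table downward from $e$, gathering all edges reachable by a short increasing trail, and then show that the resulting edge set is dense enough to contain a subgraph of average degree at least $h$. Concretely, I would build a sequence of vertex sets layer by layer. Write $e = v_0 v_1$ with $v_1 = \vv{e}{G}$, and set the "active" edge to be $e$ at height $h_0 := \hh{e}{G}$. Having reached an edge $uv$ at height at least some threshold with $v = \vv{uv}{G}$, the set $S_j(u,v)$ from \cref{def:Si} gives exactly $j$ vertices $w$ for which $vw$ sits in the $j$ rows just below $uv$ and $uvw$ is increasing. So from each currently reached edge we can branch out to roughly $h$ new edges lying a bounded number of rows lower, each still reachable by an increasing trail one step longer. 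Iterating this for about $\log n$ steps, with a drop of roughly $7h$ rows per step (chosen so that at each stage $S_{\lceil \cdot\rceil}(\cdot,\cdot)$ is well-defined given $\hh{e}{G}\ge 21 h\log n$ and the cumulative drop stays below $\hh{e}{G}$), produces the desired trails of length at most $2 + \log n$ with the height guarantee in item (3).

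The key quantitative point is the edge count. Let $L_k$ be the set of vertices appearing as the far endpoint of some reached increasing trail of length exactly $k+1$ (so $L_0 = \{v_1\}$, or a small seed set). From each vertex in $L_k$ we add an edge to each of $\approx h$ new vertices. If at some step the number of reached edges is small — say at most $h|V|/2$ where $|V|$ is the number of vertices touched so far — then the set of reached vertices at least doubles, because each reached vertex of the current layer spawns $h$ edges and there are few edges total, forcing many new vertices. Since $|V(G)| \le n$, the vertex set cannot double more than $\log n$ times, so within $\log n$ layers we must reach a point where the set $H$ of all reached edges satisfies $|E(H)| > h|V(H)|/2$, i.e. $\d(H) \ge h$. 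This is the standard "either the edge set is dense, or the vertex set grows geometrically" dichotomy, and it is exactly why the bound $21 h \log n$ (allowing $\log n$ doublings with a $7h$-or-so drop each and a constant factor of slack) appears.

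The main obstacle will be bookkeeping the interaction between the branching and the height drops carefully enough that the three conditions hold simultaneously. In particular: (a) ensuring that when we branch from an edge $uv$ we branch on the correct endpoint $v = \vv{uv}{G}$ — the definition of $S_i$ requires the new edge to be in that specific column — and that the edges produced are genuinely distinct and genuinely new vertices, not vertices already seen (a vertex seen before does not count toward doubling, so one has to argue that enough *new* vertices appear, which is where the factor-of-$2$ in $h|V|/2$ buys room); (b) controlling the accumulated height loss: after $k \le \log n$ branching steps plus the initial step, the height has dropped by at most something like $7h(\log n + 2)$, which must be kept strictly less than $\hh{e}{G}$ throughout so that every $S_j$ invoked is well-defined — here $\hh{e}{G} \ge 21 h\log n$ gives a factor-$3$ margin; and (c) verifying that each reached edge $g$ on a trail, not just the endpoint edge $f$, satisfies the height bound, which follows since heights along the constructed trail are decreasing and the trail has length at most $2 + \log n$ with bounded per-step drop. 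Once these are set up, the dense-subgraph conclusion is immediate from $\d(H)\ge h$ (no further appeal to \cref{lem:min-degree} is needed for this statement, though one could pass to a min-degree subgraph if a cleaner structure were wanted).
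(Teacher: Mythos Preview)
Your plan is the same as the paper's: branch downward from $e$ via $S_{\lfloor 7h\rfloor}(\cdot,\cdot)$ to build layers of reachable vertices, control the per-step height drop by about $7h$, and use a doubling argument to locate a dense subgraph. The height and length bookkeeping you outline is exactly right.

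The gap is in the doubling step. You assert that if the cumulative edge set satisfies $|E|\le h|V|/2$ (with $V$ the set of \emph{all} vertices touched so far) then $V$ doubles. But the number of new vertices produced at step $k$ is at most about $7h\,|L_k|$, and nothing forces the current layer $L_k$ to be comparable in size to the cumulative set $V$; if $|L_k|\ll |V|$ the cumulative vertex set does not double, and taking $H$ to be \emph{all} reached edges need not give $\d(H)\ge h$.

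The fix, which is what the paper does, is to track the layers themselves. With $N_i$ the set of endpoints of length-$i$ controlled trails, one has $|N_i|\le n$, so there is a smallest $k\le 1+\log n$ with $|N_{k+1}|\le 2|N_k|$. Then let $H$ have vertex set $N_k\cup N_{k+1}$ and edge set the final edges of controlled trails of length $k+1$. Every vertex of $N_k$ has degree at least $\lfloor 7h\rfloor\ge 6h$ in $H$, while $|V(H)|\le 3|N_k|$, so $\d(H)\ge 2h$. Restricting $H$ to a single pair of consecutive layers is what makes the average-degree estimate go through. (A minor aside: you say to branch from $v=\vv{uv}{G}$, but $S_j(u,v)$ branches from the second argument $v$ regardless of the column of $uv$; in the paper's controlled trails the column of $x_{j-1}x_j$ is $x_{j-1}$ and one branches from the \emph{other} endpoint $x_j$.)
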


\begin{proof}
Recall the definition of $S_i(x,y)$ from \cref{def:Si}.

Let $e=x_0x_1$ with $\vv{e}{G}=x_0.$ We say a trail $x_0x_1\ldots x_i$ is \textit{controlled} if it is an increasing trail with 
$\hh{x_{j-1}x_{j}}{G} \ge \hh{e}G-7hj$ and $\vv{x_{j-1}x_j}{G}=x_{j-1}$ for every $j\in \{1,\ldots,i\}$. We define $N_i$ to be the set of vertices $x_i$ for which there is a controlled trail of length $i$ ending in $x_i.$ 
Fix an integer $i$ with $1 \le i \le 1+\log n$, and let $x_i$ be an arbitrary vertex in $N_i$. By the definition of $N_i$, there is a controlled trail of the form $W=x_0x_1\ldots x_{i-1}x_i$. We have $\hh{x_{i-1}x_i}{G}\ge \hh{e}{G}-7hi \ge 21 h\log n -7hi \ge 7h$ for $i \le 1+\log n$, so $S_{\floor{7h}}(x_{i-1},x_i)$ is well-defined. Observe that any vertex $x_{i+1}\in S_{\floor{7h}}(x_{i-1},x_i)$ extends $W$ to a controlled trail ending in $x_{i+1},$ implying $x_{i+1} \in N_{i+1}$. 
Note that there are $|S_{\floor{7h}}(x_{i-1},x_i)|=\floor{7h} \geq 6h$ distinct  choices for such $x_{i+1}$.

Let $k$ be the smallest index such that $|N_{k+1}|\leq 2|N_k|.$ 
As $n \ge |N_k| \ge 2^{k-1}|N_1|=2^{k-1}$, we must have $k \le 1+\log n$. Consider the subgraph $H\subseteq G$, whose vertex set is $N_k \cup N_{k+1}$ and whose edge set consists of all edges at the end of a controlled trail of length $k+1.$ The observation in the above paragraph implies that every vertex in $N_k$ has degree at least $6h$ in $H.$ So, $H$ has at most $3|N_k|$ vertices and at least $|N_k|\cdot 6h/2$ edges (note that $N_k$ and $N_{k+1}$ might not be disjoint), meaning that it has average degree at least $2h\ge h.$
\end{proof}

The following lemma will be used in combination with \cref{lemma:dense-graph-in neighbourhoods}. It says that if we have a short increasing trail between edges $e$ and $f$, and we have a long increasing path starting with $f$, then we can combine these to find an increasing path starting with $e$ that is still quite long.
\begin{lemma}\label{lemma:join-trail-path}
Given an increasing trail $W=w_0\dots w_k$ and an increasing path $P=p_0\dots p_\ell$ with $w_{k-1}w_k=p_0p_1,$ we can obtain an increasing path using only edges of $E(W) \cup E(P)$ starting with $w_0w_1$ and having length at least $\ell/(k+1)-1.$ 
\end{lemma}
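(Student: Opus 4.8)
The plan is to walk along the trail $W = w_0 \dots w_k$ and observe where it meets the path $P = p_0 \dots p_\ell$. Since $W$ is a trail it has at most $k$ edges, and since $P$ is a path its $\ell+1$ vertices are all distinct; the key difficulty is that $W$ may revisit vertices of $P$ and so ``jump around'' on $P$, potentially using up much of $P$ without making progress. First I would set up some notation: for each index $j \in \{0, \dots, k\}$ let $\iota(j)$ be the largest index $a \in \{0, \dots, \ell\}$ such that $w_j = p_a$, if such an index exists (and leave it undefined otherwise). We know $\iota(k-1) = 0$ and $\iota(k) = 1$ since $w_{k-1}w_k = p_0p_1$. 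The idea is to find a vertex $w_j$ on $W$ which is also a vertex $p_a$ of $P$ with $a$ small, and such that the portion of $W$ from $w_0$ to $w_j$ followed by the portion of $P$ from $p_a$ to $p_\ell$ is an increasing path.

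The main step is to control the interaction between $W$ and $P$ using the orderings. Because $W$ is increasing and $P$ is increasing and they share the edge $w_{k-1}w_k = p_0p_1$, every edge of $W$ is $\ordeqE$-smaller than (or equal to) every edge of $P$ lying at position $\geq 1$, i.e.\ all edges of $W$ are below all edges of $P$ except possibly in a controlled way near the junction. More precisely I would argue: the edges of $W$ form an increasing sequence ending at $w_{k-1}w_k$, and the edges of $P$ form an increasing sequence starting at $p_0p_1 = w_{k-1}w_k$, so any edge of $W$ precedes any edge of $P$ other than $p_0p_1$ itself in the order $\ordeqE$. Hence if we concatenate a prefix $w_0 \dots w_j$ of $W$ (which is automatically an increasing trail) with a suffix $p_a \dots p_\ell$ of $P$ (automatically increasing), and if $w_j = p_a$, the concatenation is an increasing \emph{walk}; to make it a path we must ensure the two pieces are vertex-disjoint except at $w_j = p_a$, and that each piece is itself a path. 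The prefix of $W$ need not be a path, but we can shortcut it: take the shortest walk inside $W$'s prefix, which gives a genuine path of length at most $j$ from $w_0$ to $w_j$ using only edges of $W$, and by the increasing property any such sub-walk is still increasing (a subsequence of an increasing sequence). So the real content is a disjointness/accounting argument.

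To do the accounting: as $j$ runs from $k$ down to $0$, consider the first index $j$ (largest such $j$, i.e.\ closest to the junction) for which $w_j \in V(P)$; set $a = \iota(j)$. Walk $W$ backwards from $w_k$ to $w_j$: the only vertices of $P$ hit are $w_k = p_1$, $w_{k-1} = p_0$, and $w_j = p_a$ — wait, that is not quite the right pigeonhole. Instead I would do the standard argument: partition the path $P = p_0 \dots p_\ell$ into $k+1$ consecutive blocks each of length roughly $\ell/(k+1)$, and observe that $W$ has only $k+1$ vertices, so by pigeonhole some block of $P$ contains no vertex of $W$ in its \emph{interior}; then route from $e$'s end along $W$ to the first time $W$ meets $P$ at or before that block, and follow $P$ from there to whichever endpoint of $P$ lies on the far side of the empty block, obtaining a path of length at least (length of that block) $\geq \ell/(k+1) - 1$, using only edges of $W \cup P$. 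Throughout, monotonicity is preserved because every sub-sequence of an increasing edge-sequence is increasing and because all $W$-edges precede all $P$-edges in $\ordeqE$ (except the shared one). The main obstacle I anticipate is making the pigeonhole clean: precisely specifying which endpoint of $P$ to aim for so that the chosen portion of $P$ is both long and vertex-disjoint from the used portion of $W$, and checking that the shortcut of the $W$-prefix does not accidentally re-enter the retained portion of $P$. This is bookkeeping rather than a deep idea, so I expect the lemma to go through with a careful but elementary argument.
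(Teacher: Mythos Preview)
Your proposal is essentially the paper's approach: both observe that every edge of $W$ precedes every edge of $P$ in $\ordeqE$, run a pigeonhole on $P$ against the at most $k+1$ vertices of $W$, and shortcut the relevant portion of $W$ to a genuine path reaching the chosen point on $P$. The paper's execution is cleaner in two places you flagged as bookkeeping: it partitions $P$ by the actual vertices of $V(W)\cap V(P)$ (so the chosen segment $P_{w_i}$ is automatically internally disjoint from $V(W)$, and one follows only that segment rather than ``to whichever endpoint''), and to ensure the result really starts with the edge $w_0w_1$ it takes a \emph{minimal} sub-trail of $W$ from $\{w_0,w_1\}$ to $w_i$ and then prepends $w_0w_1$, rather than shortcutting from $w_0$ alone.
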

\begin{proof}
We distinguish two cases: either $w_{k-1}=p_0$ and $w_k=p_1$, or $w_{k-1}=p_1$ and $w_k=p_0.$ Let us first consider the former case. For each vertex $w_i\in V(W)\cap V(P)$, consider the path $P_{w_i}$ starting at $w_i$ and continuing along $P$ as long as possible before reaching another vertex of $V(W)$. Since $P$ starts with a vertex of $V(W)$, the paths $P_{w_i}$ partition the vertex set of $P$. As $|V(W)|=k+1$, there are at most $k+1$ of these subpaths so one of them (say $P_{w_i}$) needs to contain at least $(\ell+1)/(k+1)$ vertices. Now, consider a minimal trail, among the edges of $W$, starting with $w_0$ or $w_1$ and ending in $w_i.$ By the minimality assumption it must be a path and contain only one of $w_0,w_1$. Hence, appending to it the edge $w_0w_1$, we obtain an increasing path $W'$ between $w_0w_1$ and $w_i$. Then, concatenating $W'$ and $P_{w_i}$ yields an increasing path of length at least $(\ell+1)/(k+1)-1$.

Returning to the latter case where $w_{k-1}=p_1$ and $w_k=p_0$, we can consider the trail $W'=w_0\dots w_{k-1}p_2$ and the path $P'=p_1\dots p_\ell$. We are now in the situation of the former case, but with a path of length $\ell-1$. Repeating the above argument yields the desired result.
\end{proof}
\mk{I moved the above two lemmas to this section because the last section is more suited to basic lemmas about height tables.}

We are now finally ready to prove \cref{thm:1-eps-regular}.

\begin{proof}[Proof of \cref{thm:1-eps-regular}]
Let 
$n=|V(G)|$ and $d=\d(G).$ Let $C=70.$ 
The proof will proceed by induction on $t$. 
The base case $t=1$ holds trivially. We now assume the claim is true for some choice of 
integer $t\ge 1$, and show that it is true for $t+1.$

First note that we may assume
$$\frac{a^{1-1/(t+1)}}{(C \log n)^{2(t+1)}}>1,$$ as otherwise the desired result holds trivially. It follows that
\begin{align}\label{equ:a-big}
a>(C\log n)^{2t+4}.    
\end{align}
That is, we can assume that $a$ is large for the rest of the proof. Now, we apply \cref{lemma:dense-graph-in neighbourhoods} with $G$ and $e$, taking 
$\d(G')\ge a/(21\log n)$, such that for every edge $f\in E(G')$ there is a trail $T_f$ in $G$ with the following properties:
\begin{enumerate}
    \item $T_f$ starts with $e$ and ends with $f$,
    \item $T_f$ has length at most $2+\log n$, and
    \item $\hh{g}G \ge \hh{e}G-a$ for every $g\in E(T_f)$.
\end{enumerate}
Note that by removing some edges if necessary we can assume  
$\d(G') = \floor{a/(21\log n)}.$ Next, we apply \cref{lem:regularisation} to $G'$ to obtain a subgraph $H$ of $G'$ with 
\begin{align}\label{equ:0}
    \d(H) \geq \frac{4a}{(C\log n)^2}, & \quad\Delta(H) < \frac{120a}{(C\log n)^2}
\end{align}
(we have used \cref{equ:a-big} to simplify the above inequalities and to deduce $|V(G')| \ge 2$). We now proceed to find a long increasing path $P,$ within $H,$ by iteratively invoking the inductive assumption together with \cref{LemmaLength3PathDrop}. After we have done this, the final step of the proof will be to use \cref{lemma:join-trail-path} to combine $P$ with some $T_f$, yielding an increasing path starting with $e$ with the desired properties.

Define 
$$m=\frac{(240a)^{1-1/(t+1)}}{(C\log n)^{2t}},\quad\ell=\frac{m^{1-1/t}}{(C\log n)^{2t}}=\frac{(240a)^{1-2/(t+1)}}{(C\log n)^{4t-2}}.$$
The parameter $m$ will control how many rows of the height table we will use per application of the inductive hypothesis, while $\ell$ will denote the length of the path with which we extend $P$ in each iteration.

\begin{claim}\label{claim:iteration}
There is a sequence of graphs $H=G_1\supseteq G_2\supseteq \dots\supseteq G_{\ceil{\d(H)/(48m)}}$ and a sequence of paths $P_1,\dots, P_{\ceil{\d(H)/(48m)}}$, with $P_i \subseteq G_i$, satisfying the following properties for each $i$. (Let $e_i=x_iy_i$ be the starting edge and let $f_i=z_iw_i$ be the ending edge of each $P_i$).

\begin{enumerate}[label=(\alph*)]
    \item \label{enum:a} $G_{i+1}=G_i \setminus V(P_i-\{z_i,w_i\}),$
    \item \label{enum:b} 
    $|P_i| = \ceil{\ell}$,
    \item \label{enum:c} 
    $z_iw_ix_{i+1}y_{i+1}$ is an increasing path, and
    \item \label{enum:d} 
    $\hh{f_i}{G_i}\ge \d(H)/2-6im$.
\end{enumerate}
\end{claim}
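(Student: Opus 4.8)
The plan is to construct the graphs $G_i$ and paths $P_i$ by induction on $i$, starting from $G_1 = H$. Suppose $G_1, \dots, G_i$ and $P_1, \dots, P_{i-1}$ have been built so that properties \ref{enum:a}--\ref{enum:d} hold so far; in particular we have the ending edge $f_{i-1} = z_{i-1}w_{i-1}$ of $P_{i-1}$, an increasing path $z_{i-1}w_{i-1}x_iy_i$ as in \ref{enum:c}, and the bound $\hh{f_{i-1}}{G_{i-1}} \ge \d(H)/2 - 6(i-1)m$ from \ref{enum:d}. The first task is to pin down a high starting edge $e_i = x_iy_i$ in $G_i$. Since $G_i$ is obtained from $G_{i-1}$ by deleting only the internal vertices of $P_{i-1}$, the edge $f_{i-1}$ survives in $G_i$, and by \cref{LemmaSubgraphMonotonicity} its height can only have dropped in passing from $G_{i-1}$ to $G_i$ — but we instead want a \emph{lower} bound, which is why \ref{enum:c} was arranged: the edge $x_iy_i$ lies just below $f_{i-1}$ in the height table of $G_{i-1}$, and one should check that after deleting $V(P_{i-1} - \{z_{i-1}, w_{i-1}\})$ it is still high. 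Concretely, I would apply \cref{LemmaLength3PathDrop} inside $G_{i-1}$ with $U = V(P_{i-1}) \setminus \{z_{i-1}, w_{i-1}\}$ (so $|U| \le \ceil{\ell} \le m$, using $\ell \le m$, and the maximum-degree bound \eqref{equ:0} together with $m^2/2 > \Delta(H)|U|$, which follows from \eqref{equ:a-big}): this simultaneously produces the edge $x_iy_i = f_{i-1}$'s continuation with $\hh{x_iy_i}{G_i} \ge \hh{f_{i-1}}{G_{i-1}} - 4m - 3 \ge \d(H)/2 - 6(i-1)m - 4m - 3 \ge \d(H)/2 - 6im$ (absorbing the $+3$ and the slack into the $6im$ term using that $m$ is large), and certifies property \ref{enum:c}. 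For $i = 1$ we instead just take $e_1$ to be an edge of $H$ with $\hh{e_1}{H} \ge \d(H)/2$, which exists by \cref{LemmaHighEdgeInDenseGraph}.

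Next I would produce the path $P_i$ itself by applying the induction hypothesis of \cref{thm:1-eps-regular} — i.e. the statement for the parameter $t$ — inside $G_i$, to the edge $e_i = x_iy_i$ with the threshold parameter $m$ in the role of "$a$". This requires $\hh{e_i}{G_i} > m$, which holds since $\hh{e_i}{G_i} \ge \d(H)/2 - 6im$ and $i \le \ceil{\d(H)/(48m)}$ forces $6im \le \d(H)/8 + O(m)$, leaving $\hh{e_i}{G_i} \ge \d(H)/2 - \d(H)/8 - O(m) > m$ once $\d(H)$ is large compared to $m$ (which is exactly \eqref{equ:a-big} after unwinding the definitions of $m$ and $\d(H)$). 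The induction hypothesis then yields an increasing path of length at least $m^{1-1/t}/(C\log n)^{2t} = \ell$ starting at $e_i$, all of whose edges $f$ satisfy $\hh{f}{G_i} \ge \hh{e_i}{G_i} - m \ge \d(H)/2 - (6i+1)m$; truncating this path to exactly $\ceil{\ell}$ edges gives $P_i$ and establishes \ref{enum:b}. Property \ref{enum:a} is just the definition $G_{i+1} := G_i \setminus V(P_i - \{z_i, w_i\})$, and property \ref{enum:d} for index $i$ is the height bound on the final edge $f_i$ of $P_i$ just recorded (again folding the $-(6i+1)m$ into $-6im$ is fine at the cost of a constant, which one would absorb by adjusting the slack — alternatively one tracks the constants slightly more carefully so that the stated $\d(H)/2 - 6im$ comes out exactly).

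The main obstacle, and the place I would be most careful, is the bookkeeping that keeps all the height bounds self-consistent across the $\ceil{\d(H)/(48m)}$ iterations: each step loses $4m+3$ from \cref{LemmaLength3PathDrop} and another $m$ from the induction hypothesis, so per iteration we lose roughly $6m$ (hence the $6im$ in \ref{enum:d} and the $48m$ denominator in the number of iterations, chosen so that after all iterations the height is still positive — in fact still at least $\d(H)/2 - 6 \cdot \ceil{\d(H)/(48m)} \cdot m \ge \d(H)/2 - \d(H)/8 - 6m > 0$). One has to verify at every iteration that (i) the relevant $S_j(\cdot,\cdot)$ sets used implicitly by \cref{LemmaLength3PathDrop} are well-defined, which needs $\hh{e_i}{G_i} > 4m+3$; (ii) the hypothesis $m^2/2 > \Delta(G_i)|U|$ of \cref{LemmaLength3PathDrop} holds, for which one uses $\Delta(G_i) \le \Delta(H) < 120a/(C\log n)^2$ and $|U| \le m$, reducing to $m/2 > 120a/(C\log n)^2$, i.e. to a polynomial inequality in $a$ that \eqref{equ:a-big} guarantees once $C = 70$; and (iii) $\hh{e_i}{G_i} > m$ so the induction hypothesis applies. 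All three reduce to inequalities of the form "$\d(H)$ (equivalently $a$, up to polylog factors) is larger than a fixed power of $\log n$ times $m$", and all follow from \eqref{equ:a-big}; the only real work is choosing the constants ($C = 70$, the $48$, the $6$) so that every one of these inequalities has room to spare simultaneously. Everything else is a routine substitution of the definitions of $m$ and $\ell$.
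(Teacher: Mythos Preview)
Your approach is essentially the same as the paper's: start from a high edge $e_1$ via \cref{LemmaHighEdgeInDenseGraph}, alternately apply the induction hypothesis (with parameter $m$) to extend by a path of length $\ceil\ell$, then \cref{LemmaLength3PathDrop} with $U=V(P_i)\setminus\{z_i,w_i\}$ to hop to the next starting edge, losing about $6m$ in height per round. The paper organises the bookkeeping slightly differently (it uses $4m+3\le 5m$ so that each round costs exactly $5m+m=6m$, making \ref{enum:d} come out on the nose without any ``folding''), but the substance is identical.

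There is one genuine slip in your verification of hypothesis~(ii) for \cref{LemmaLength3PathDrop}. You bound $|U|\le m$ and then reduce $m^2/2>\Delta(H)|U|$ to $m/2>120a/(C\log n)^2$. That last inequality is \emph{false}: for instance when $t=1$ it reads $(240a)^{1/2}/(C\log n)^2 > 240a/(C\log n)^2$, i.e.\ $240a<1$. The point is that $|U|\le m$ is far too weak here; you must use the sharper bound $|U|=\ceil\ell-1<\ell$ (which you in fact wrote down earlier). Then the required inequality is $m^2/2>\Delta(H)\ell$, and this one does follow from \eqref{equ:0}: since $m^2=(240a)\ell/(C\log n)^2$ and $\Delta(H)<120a/(C\log n)^2$, one gets $m^2>2\Delta(H)\ell$ directly. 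This is exactly the paper's inequality \eqref{equ:1}. With that correction your argument goes through.
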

\begin{proof}

 Let us start by observing several inequalities we will require in order to be able to apply \cref{LemmaLength3PathDrop}. Using \cref{equ:0}, we have  
 \begin{align}\label{equ:1}
 m^2=\frac{240a\ell}{(C\log n)^2}>2\Delta(H)\ell.
 \end{align}
 Furthermore, $a/m>(C \log n)^{2t+2}/240>7(C\log n)^2$ by \cref{equ:a-big}, giving 
 \begin{align}\label{equ:2}
     \d(H)/4 & \ge \frac{a}{(C\log n)^2} > 7m \ge 4m+3.
 \end{align}
 Finally, notice that 
 \begin{align}\label{equ:3}
 \ceil{\ell}&<m.
 \end{align}
 
We will define the $G_i$ and $P_i$ inductively. Choose $e_1 \in E(H)$ with $\hh{e_1}{H} \ge \d(H)/2.$ This is possible by \cref{LemmaHighEdgeInDenseGraph}. We start by applying the inductive assumption to $H$ and $e_1$ with $a=m$ (recall that $\d(H)/2>m$ by \cref{equ:2}). This yields a path $P_1$ of length $\ceil{\ell}$ 
  starting with $e_1$ and ending with an edge $f_1$ satisfying $\hh{f_1}{G} \ge \d(H)/2-m.$

Next, consider $1 \le i \le \d(H)/(48m)-1$, and suppose $G_1,\dots,G_i$ and $P_1,\dots,P_i$ have already been constructed. Define $G_{i+1}$ according to \ref{enum:a}. We then apply \cref{LemmaLength3PathDrop} to $G_i$ (taking $U=V(P_i)\setminus \{z_i,w_i\}$) to find an edge $e_{i+1}=x_{i+1}y_{i+1}$ such that \ref{enum:c} is satisfied and $\hh{e_{i+1}}{G_{i+1}} \ge \hh{f_i}{G_i}-5m$. Equations~\eqref{equ:1} to~\eqref{equ:3} ensure that the conditions of \cref{LemmaLength3PathDrop} are satisfied.
 
 Since $i+1 \le \ceil{\d(H)/(48m)}$, we can use \ref{enum:d} and the bound $\d(H)>28m$ from \cref{equ:2} to deduce that $\hh{f_i}{G_i}\ge \d(H)/2-6im \ge \d(H)/4+6m$. It follows that $\hh{e_{i+1}}{G_{i+1}}\ge \d(H)/4>4m,$ using \cref{equ:2}. Thus we can apply the inductive assumption to $G_{i+1}$ with starting edge $e_{i+1}$ and $a=m$. This gives a path $P_{i+1}$ of length $\ceil{\ell}$ 
 starting with $e_{i+1}$ and ending with some $f_{i+1}$ satisfying $\hh{f_{i+1}}{G_{i+1}} \ge \hh{e_{i+1}}{G_{i+1}}-m\ge \hh{f_i}{G_i}-6m$. (This final inequality proves that  \ref{enum:d} is satisfied).
\end{proof}

We now return to the proof of \cref{thm:1-eps-regular}. Note that the paths produced by \cref{claim:iteration} are all disjoint. Condition \ref{enum:c} allows us to join them up into one path $P'$ starting at $e_1=x_1y_1$ and of length at least
\begin{align*}
\frac{\d(H)}{48m} \cdot \ell & \ge \frac{a^{1-1/(t+1)}}{12\cdot 240^{1/(t+1)}\cdot (C \log n)^{2t}} \\
&\ge 8\log n \cdot \frac{a^{1-1/(t+1)}}{(C \log n)^{2t+2}}.
\end{align*}
(In the first inequality we used the estimate $\d(H)\ge \frac{4a}{(C\log n)^2}$ from \cref{equ:0}). Note that when joining the paths, the path $P_{i+1}$ might start with the vertex $y_{i+1}$, in which case we simply replace the edge $y_{i+1}x_{i+1}$ with $w_ix_{i+1}$.

Finally, we apply \cref{lemma:join-trail-path} to join the trail $T_{e_1}$ with the path $P'$ and obtain an increasing path starting at $e$ of length at least
$$\frac{a^{1-1/(t+1)}}{(C \log n)^{2(t+1)}}.$$
As this path lies completely within 
$T_{e_1}\cup G',$ its edges have heights at least $\hh{e}{G}-a,$ as desired.
\end{proof}

\section{Concluding remarks}
\label{sec:concluding}
In this paper we have proved that any edge-ordering of the complete graph on $n$ vertices contains a monotone path of length $n^{1-o(1)}$. We also proved more generally that if $d=(\log n)^{\omega(1)}$ then in any edge-ordering of any $n$-vertex graph with average degree $d$, there is a monotone path of length $d^{1-o(1)}$. Of course, there is still room for improvement in both these results. Does any edge-ordering of the complete graph permit a monotone path of length $\Omega(n)$, or even $(1/2-o(1))n$ (as asked by Calderbank, Chung and Sturtevant~\cite{CCS84})? Can one improve R\"odl's bound of $\Omega(\sqrt d)$ for graphs with average degree $d$, when $d$ is very small relative to $n$? We observe that R\"odl's bound can indeed be improved for graphs which are locally sparse.

\begin{proposition}
\label{prop:locally-sparse}
Let $G$ be an edge-ordered graph with average degree $d$, and suppose there is $0<\varepsilon<1$ such that every set of at most $\varepsilon d$ vertices induces at most $(1/2-\varepsilon)d$ edges. Then $G$ has a monotone path of length $\varepsilon d$.
\end{proposition}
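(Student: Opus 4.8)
The plan is to run the classical greedy algorithm for building monotone paths and couple it with a potential‑function argument, using the local sparsity hypothesis to control the edges that the greedy ``wastes''. I would assume for contradiction that $G$ has no monotone path of length $\varepsilon d$, and process the edges of $G$ one at a time in increasing order of the edge‑ordering. Throughout, I maintain for every vertex $v$ an increasing path $P(v)$ ending at $v$, together with its length $\ell(v)=|E(P(v))|$; initially each $P(v)$ is the single vertex $v$ and $\ell(v)=0$. When an edge $uv$ is processed, say with $\ell(u)\ge\ell(v)$ at that instant and $v\notin V(P(u))$, I replace $P(v)$ by $P(u)$ followed by $uv$; since $uv$ is larger than every edge processed so far this is again an increasing path, and now $\ell(v)=\ell(u)+1$ (symmetrically one may update $P(u)$ when $\ell(v)\ge\ell(u)$). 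By the contradiction hypothesis, every $P(v)$ always has fewer than $\varepsilon d$ edges, hence at most $\varepsilon d$ vertices.

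Next I would introduce the potential $\Phi=\sum_{v\in V(G)}\ell(v)$, which is nondecreasing and, by the previous observation, satisfies $\Phi<\varepsilon d\,n$ at the end of the process. Call an edge \emph{useful} if processing it strictly increases $\Phi$. Since $\Phi$ starts at $0$ and each useful edge raises it by at least $1$, there are fewer than $\varepsilon d\,n$ useful edges. Conversely, if $uv$ (with $\ell(u)\ge\ell(v)$ when it is processed) is \emph{not} useful, then the update above did not fire, which forces $v\in V(P(u))$ at that moment --- otherwise the update would have raised $\ell(v)$. Hence every non‑useful edge, at the instant it is processed, is a chord of a path through one of its endpoints; in particular its two endpoints lie in a common set of at most $\varepsilon d$ vertices. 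The heart of the proof is the claim that there are at most $(1/2-\varepsilon)d\,n$ non‑useful edges. Granting this, $|E(G)|=(\#\text{useful})+(\#\text{non-useful})<\varepsilon d\,n+(1/2-\varepsilon)d\,n=dn/2=|E(G)|$, a contradiction, so the required monotone path exists.

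To prove the claim one charges each non‑useful edge to the endpoint $u$ with the larger value of $\ell$. The local sparsity hypothesis is then brought to bear: one first checks that a non‑useful edge does not itself alter $P(u)$, then tracks the successive versions of $P(u)$ (there are at most $\varepsilon d$ of them, since $\ell(u)$ strictly increases at each change), and argues that the non‑useful edges charged to $u$ are all contained in an induced subgraph on at most $\varepsilon d$ vertices --- so that the hypothesis bounds their number by $(1/2-\varepsilon)d$, and summing over all $u$ gives the claim.

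I expect this last step to be the main obstacle. The difficulty is that a given vertex's greedy path is rebuilt many times, with rather different vertex sets, so that naively the non‑useful edges at $u$ could be spread over all $\varepsilon d$ versions, yielding only a bound of order $(\varepsilon d)^2$ per vertex --- far too weak once $d$ is large compared with $1/\varepsilon^2$ (exactly the range where the hypothesis is not essentially vacuous). Overcoming this should require either a structural argument confining the relevant vertex sets to a single set of size at most $\varepsilon d$ (for instance the vertex set of the final $P(u)$), or a modification of the greedy designed to make these sets behave monotonically; the rest --- the greedy itself, the potential count, and the final arithmetic --- is routine.
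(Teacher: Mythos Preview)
The obstacle you flag in the last paragraph is genuine and, as far as I can see, fatal for this particular greedy. When $P(u)$ is overwritten its vertex set can change arbitrarily, so the non-useful edges charged to $u$ over the whole run need not lie in any single set of size at most $\varepsilon d$. Per version you only get the trivial bound $|V(P(u))|-1<\varepsilon d$: these edges are all of the form $uv$ with $v\in V(P(u))$, hence all incident to $u$, and the local-sparsity hypothesis says nothing about the degree of a fixed vertex inside a small set. With up to $\varepsilon d$ versions this yields at most $(\varepsilon d)^2$ non-useful edges per vertex, which plugged into your arithmetic recovers R\"odl's $\Omega(\sqrt d)$ but no more---the hypothesis is never actually used. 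The point is that the hypothesis bounds \emph{all} edges inside one small set, not just those through a prescribed vertex, so to exploit it the wasted edges must sit together in a single such set; a greedy that maintains a path at every vertex does not produce one.

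The paper's argument sidesteps this by building a \emph{single} increasing path. Using the height table, it starts from an edge at height at least $d/2$ and, from the current endpoint $x_{i-1}$, scans the column of $x_{i-1}$ downward and takes the first edge $x_{i-1}y$ with $y\notin\{x_0,\dots,x_{i-1}\}$. Every skipped edge therefore has both endpoints on the \emph{current} path, and since this path only grows, every edge skipped at any stage lies inside the vertex set of the \emph{final} path $P=x_0\dots x_\ell$. A telescoping count shows the total number of skips is at least $d/2-\ell$; if $\ell<\varepsilon d$ this exceeds $(1/2-\varepsilon)d$, while $|V(P)|\le\varepsilon d$ forces at most $(1/2-\varepsilon)d$ such edges by the hypothesis---a contradiction. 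The nested, monotone family of vertex sets coming from one growing path is precisely what lets a single application of local sparsity suffice, and is the missing idea in your approach.
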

\begin{proof}
We use the machinery developed in \cref{sec:height-table}. Fix a vertex-ordering of $G$ and consider its height table. By \cref{LemmaHighEdgeInDenseGraph}, there is an edge $x_0x_1$ with height $\hh{x_0x_1}G\ge d/2$. Assume without loss of generality that $\vv{x_{0}x_{1}}G=x_{0}$. Now, we will iteratively build an increasing path $x_0x_1\dots x_{\varepsilon d}$, with each $\vv{x_{i-1}x_{i}}G=x_{i-1}$. Given $x_0x_1\dots x_{i-1}$, we show how to choose $x_i$. For each $1\le h<\hh{x_{i-2}x_{i-1}}G$, by \cref{ObservationNonEmptyBelowRow} there is an edge $x_{i-1}y_h$ in position $(x_{i-1},h)$ of the height table (also, we have $x_{i-1}y_h\ordeqE e$ by \cref{ObservationLexOrderImpliesGraphOrder}). Let 
$$h^*=\max\left\{h:1\le h<\hh{x_{i-2}x_{i-1}}G,y_h\notin \{x_0,\dots,x_{i-1}\}\right\},$$
and let $x_i=y_{h^*}$. That is, we consider the position $(x_{i-1},h)$ in the height table and scan through all edges vertically below it, searching for the first suitable edge to extend our path.

Suppose this procedure fails to produce a path of length $\varepsilon d$ (say it produces a path $P=x_0\dots x_{\ell}$ of length $\ell<\varepsilon d$). Then, during the above procedure there were more than $(1/2-\varepsilon)d$ instances where we looked at an edge but could not add it because it was between two vertices in $P$. But this is impossible, because our local sparsity condition implies that the vertex set of $P$ induces at most $(1/2-\varepsilon)d$ edges.
\end{proof}

In particular, we remark that \cref{prop:locally-sparse} can be used to prove that if $p=n^{-1/2-\varepsilon}$, for fixed $\varepsilon>0$, then any edge-ordering of the random graph $G\in \mathbb G(n,p)$ typically has a monotone path of length about 
$\varepsilon np$, which is proportional to its average degree. In this regime, it gives a tight result and improves the lower bound $\frac{np}{\omega(1)\log n}$ due to De Silva, Molla, Pfender, Retter and Tait~\cite[Theorem 6]{DMPRT16}.

Finally, we remark that Chung and Graham (see~\cite{CG}) proposed the following general question: letting $p(v)$ denote the maximum length of an increasing path starting at vertex $v$, is it always true that 
$\sum_v p(v)\geq|E(G)|$?

\subsubsection*{Acknowledgements}
The third author thanks Katherine Staden and Tibor Szab\'o for stimulating discussions related to this paper. We would also like to thank the referee for their careful reading of the manuscript and their valuable comments.

\begin{appendices}
\crefalias{section}{appsec}

\section{Graphs with no large nearly-regular subgraph}
\label{sec:no-regular-subgraph}
In this appendix we prove that \cref{lem:regularisation} is essentially optimal, using a variant of a probabilistic construction of Pyber, R\"odl and
Szemer\'edi~\cite{PRS95} (see also~\cite{Alo08}).
\begin{proposition}
For any fixed $0<\varepsilon<1$ and $K\in\NN$, and sufficiently
large $N$ (relative to $\varepsilon,K$), there is an $N$-vertex
graph $F$ with average degree at least $N^{\varepsilon}$, satisfying
the following property. If $H$ is a subgraph of $F$ with all degrees
lying in the range $\left[d',Kd'\right]$, then $d'\le\left(64K^{3}\right)N^{\varepsilon}/\floor{\left(1-\varepsilon\right)\log N-2}$.
\end{proposition}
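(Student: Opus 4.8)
The plan is to give a probabilistic construction in the spirit of Pyber, R\"odl and Szemer\'edi. Put $t:=\floor{(1-\varepsilon)\log N-2}$, and build $F$ on a vertex set $V$ of size roughly $N$ partitioned into levels $V_{0},\dots,V_{t}$ of geometrically increasing size, with $|V_{0}|\approx N^{\varepsilon}$ and $|V_{t}|\approx N$; between each pair of consecutive levels I would place an independent random bipartite graph. The densities of these bipartite graphs must be tuned to achieve two competing goals: on the one hand $\d(F)\ge N^{\varepsilon}$, on the other hand $F$ should be very far from regular — the vertices in the small low-index levels should carry far more edges than a typical vertex of the large level $V_{t}$, and within each level the degrees should themselves be spread over a factor much larger than $K$ (for instance by further partitioning each level into geometrically sized sub-blocks attached to the next level with geometrically varying densities). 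This second requirement is what defeats the obvious obstruction: a random bipartite graph between two \emph{full} levels is already an almost-regular subgraph whose minimum degree is comparable to its density, so one must make sure that no such dense almost-regular piece fits inside $F$ while still keeping the average degree as large as $N^{\varepsilon}$. The parameter $t$ and, later, the constant $64K^{3}$ come respectively from the number of levels and from constant-factor (really, factor-$\Theta(K)$) losses incurred a bounded number of times in the argument.

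The single probabilistic fact needed is a uniform concentration estimate for edge counts: with probability $1-o(1)$, for \emph{every} pair of disjoint sets $X,Y\subseteq V$ one has $e_{F}(X,Y)\le 2\E[e_{F}(X,Y)]+C(|X|+|Y|)\log N$ for an absolute constant $C$. This follows from a Chernoff bound for each fixed pair together with a union bound over the at most $4^{|V|}$ pairs, the additive $C(|X|+|Y|)\log N$ being exactly what the union bound costs; and the parameters of the construction are chosen so that $\d(F)\ge N^{\varepsilon}$ holds (again with probability $1-o(1)$, by a routine concentration argument). I would fix one graph $F$ on which both events hold.

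Now let $H\subseteq F$ have all degrees in $[d',Kd']$. Replacing $H$ by one of its connected components does not change its degrees, so we may assume the levels met by $H$ form an interval $i_{0},\dots,i_{1}$; write $a_{i}=|V(H)\cap V_{i}|$ and let $h_{i}$ be the number of $H$-edges between $V_{i-1}$ and $V_{i}$. The degree hypotheses give $d'a_{i}\le h_{i}+h_{i+1}\le Kd'a_{i}$ for every $i$, and the concentration estimate bounds each $h_{i}$ in terms of $a_{i-1},a_{i}$ and the densities of the construction. The crucial use of near-regularity is that a vertex of $H$ may keep at most $Kd'$ of its $F$-edges, so the many high-$F$-degree vertices in the low levels cannot be fully exploited; combined with the fact that a vertex in the extreme levels $i_{0}$ or $i_{1}$ has \emph{all} of its $H$-edges going in a single direction, and so has degree at most its one-sided $F$-degree, one obtains that either $d'$ is already small or $H$ must spread across many levels. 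Carrying this dichotomy through, and summing the $h_{i}$-estimates over all $\Theta(\log N)$ levels — exploiting that the $\approx N^{1+\varepsilon}$ edges of $F$ are spread over $\Theta(\log N)$ levels, so that $H$ can capture only a $\Theta(1/\log N)$ fraction of them — forces, after the constants are tracked, $d'\le 64K^{3}N^{\varepsilon}/t$; comparing with $e(H)\ge d'|V(H)|/2$ closes the argument.

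The main obstacle is clearly the construction itself. One has to balance the level sizes and the within-level degree spread delicately enough that $F$ has average degree at least $N^{\varepsilon}$ yet contains no almost-regular subgraph with minimum degree appreciably larger than $N^{\varepsilon}/\log N$; since a single overly dense bipartite layer is already an almost-regular subgraph of large minimum degree, this is a genuine tightrope, and the naive layered constructions all fail here. The secondary difficulty — that ``almost-regular subgraph'' ranges over exponentially many objects — is precisely what the uniform concentration estimate is designed to absorb, after which bounding $d'$ is a careful but essentially routine deterministic computation. The relevant construction is that of Pyber, R\"odl and Szemer\'edi~\cite{PRS95} (see also~\cite{Alo08}), adapted to the present quantitative setting.
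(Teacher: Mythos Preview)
Your proposal describes a construction that is structurally different from the paper's, and as written it does not close. You set up a \emph{path} of levels $V_0,\dots,V_t$ with random edges only between consecutive levels, and your deterministic analysis (connected components giving an interval of levels, the quantities $h_i$, the endpoint argument at $i_0,i_1$) is tailored to that path structure. But you yourself identify the obstruction: the bipartite graph between two consecutive full levels is already almost-regular. Quantitatively, since $\sum_i|V_i|\le 2N$, the inequality $\sum_i e_i \ge N^{1+\varepsilon}/2$ forces some layer to have average degree $\ge N^{\varepsilon}/2$; a standard random bipartite graph on parts of sizes within a factor~$2$ then yields an $H$ with $d'=\Omega(N^{\varepsilon})$, not $O(N^{\varepsilon}/\log N)$. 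Your proposed remedy (``sub-blocks with geometrically varying densities'') is not actually specified, and once you try to specify it you will find you are essentially rebuilding the paper's construction inside a single layer --- at which point the remaining $t-1$ layers and the interval/endpoint analysis become irrelevant. The heuristic ``the edges are spread over $\Theta(\log N)$ levels so $H$ captures only a $1/\log N$ fraction'' is not an argument: $H$ is allowed to sit entirely in one layer.

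The paper's construction is simpler and avoids this trap. It is \emph{bipartite with a common centre}: a set $A$ of size $n\approx N/2$ and, on the other side, blocks $B_1,\dots,B_\ell$ with $|B_i|=2^i\lfloor n^{\varepsilon}\rfloor$ and edge probability $4/(2^i\ell)$ between $A$ and $B_i$. Thus every vertex of $A$ has expected degree $\approx 4n^{\varepsilon}$, while vertices of $B_i$ have expected degree $\approx 4n/(2^i\ell)$, ranging over a geometric scale. The only probabilistic input is, besides the edge count, a single Bernstein-type bound: for each $m$ and the corresponding threshold $r=\lceil\log(m/(4K^2n^{\varepsilon}))\rceil$, any $A'\subseteq A$, $B'\subseteq B_{\ge r}$ of size at most $m$ span at most $(32K^2 n^{\varepsilon})m/\ell$ edges. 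The deterministic step is then a two-line dichotomy: with $m=K|A(H)|$ one has $|B(H)|\le m$; the low-index part $B_{<r}$ has at most $m/(2K^2)$ vertices, so at most $Kd'\cdot m/(2K^2)=d'm/(2K)$ edges of $H$ go there; and the concentration bound caps the rest. Comparing with $e(H)\ge d'm/K$ gives $d'\le 64K^3 n^{\varepsilon}/\ell$. There is no interval-of-levels argument and no need to spread $H$ across many layers.
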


\begin{proof}
We will define a random graph $G$ parametrised by an integer $n$,
which will have between $n$ and $2n$ vertices. We will show that
with probability $1-o\left(1\right)$, this graph $G$ has average degree at least
$4n^{\varepsilon}$, and that if $H$ is a subgraph of $G$ with degrees
lying in the range $\left[d',Kd'\right]$, then 
$d'\le\left(64K^{3}\right)n^{\varepsilon}/\floor{\left(1-\varepsilon\right)\log n-1}$.

For sufficiently large $N$ we may then obtain the desired graph $F$
by taking a typical outcome of $G$, for $n=\ceil{N/2}$, and adding
at most $N/2$ isolated vertices.

Let $\ell=\floor{\left(1-\varepsilon\right)\log n-1}$,
and let $A,B_{1},\dots,B_{\ell}$ be disjoint sets with $\left|A\right|=n$
and $\left|B_{i}\right|=2^{i}\floor{n^{\varepsilon}}$. Let $B:=\bigcup_{i=1}^{\ell}B_{i}$,
which has size at most $n$. Consider the random bipartite graph $G$
with parts $A$ and $B:=\bigcup_{i=1}^{\ell}B_{i}$, where each edge
between $A$ and $B_{i}$ is present with probability $4/\left(2^{i}\ell\right)$. For an integer $r$, let $B_{<r}=\bigcup_{i=1}^{r-1}B_{i}$ and $B_{\ge r}=B\backslash B_{<r}=\bigcup_{i=r}^{\ell}B_{i}$ (if $r\le 1$ then $B_{<r}=\emptyset$ and $B_{\ge r}=B$).
We claim that the following properties hold simultaneously with positive
probability.
\begin{itemize}
\item $G$ has at least $2n^{1+\varepsilon}$ edges;
\item For any $m\ge n^{\varepsilon/2}$, with $r=\ceil{\log \left(m/\left(4K^{2}n^{\varepsilon}\right)\right)}$,
if $A'\subseteq A$ and $B'\subseteq\bigcup_{i=r+1}^{\ell}B_{i}$ satisfy
with $\left|A'\right|,\left|B'\right|\le m$,
then $e\left(A',B'\right)\le\left(32K^{2}n^{\varepsilon}\right)m/\ell$.
\end{itemize}
Both these properties can easily be shown to hold with probability
$1-o\left(1\right)$ with a large deviation inequality. Before we
give the details, we show how these properties imply the desired result.
So, suppose $G$ satisfies the above properties. By the first property,
$G$ has average degree at least $4n^{\varepsilon}$. Then, consider
a subgraph $H$ of $G$ with degrees lying in the range $\left[d',Kd'\right]$.
We wish to show that $d'\le\left(64K^{3}\right)n^{\varepsilon}/\ell$.

Let $A\left(H\right)=A\cap V\left(H\right)$ and $B\left(H\right)=B\cap V\left(H\right)$,
and let $m=K\left|A\left(H\right)\right|$. We must have $m/K^{2}\le\left|B\left(H\right)\right|\le m$,
because 
$\sum_{a\in A(H)}\deg_{H}\left(a\right)=\sum_{b\in B(H)}\deg_{H}\left(b\right)$
.
Note that 
\begin{equation}
e_{H}\left(A\left(H\right),B\left(H\right)\right)\ge d'\left|A(H)\right|\ge d'm/K.\label{eq:pyber-lower}
\end{equation}
Note also that we can assume $m\ge n^{\varepsilon/2}$ (otherwise
each vertex in $B$ trivially has degree at most $\left|A(H)\right|\le m<Kn^{\varepsilon/2}<\left(64K^{3}\right)n^{\varepsilon}/\ell$,
and we are done). Now, let $r=\ceil{\log \left(m/\left(4K^{2}n^{\varepsilon}\right)\right)}$, so $\left|B_{<r}\right|\le 2^r n^\epsilon\le m/\left(2K^{2}\right)$ (if $r\le 1$ this is trivially true, otherwise it follows from the fact that $|B_{<r}|\le |2B_{r-1}|$). We have
\begin{equation}
e_{H}\left(A(H),B\left(H\right)\cap B_{<r}\right)\le Kd'\cdot m/\left(2K^{2}\right)=d'm/\left(2K\right).\label{eq:pyber-upper-small}
\end{equation}
Next, let $B_{\ge r}=B\backslash B_{<r}=\bigcup_{i=r}^{\ell}B_{i}$,
so by the second property of $G$, 
\begin{equation}
e_{H}\left(A\left(H\right),B\left(H\right)\cap B_{\ge r}\right)\le\left(32K^{2}n^{\varepsilon}\right)m/\ell.\label{eq:pyber-upper-big}
\end{equation}
Combining \cref{eq:pyber-lower}, \cref{eq:pyber-upper-small} and \cref{eq:pyber-upper-big}
gives
\[
d'm/\left(2K\right)\le\left(32n^{\varepsilon}K^{2}\right)m/\ell,
\]
meaning that $d'\le\left(64K^{3}\right)n^{\varepsilon}/\ell$, as
desired.

Now we prove the claimed properties of $G$. For $v\in A$ and $w\in B_i$
let $X_{v,w}=\one_{\left\{ v,w\right\} \in E\left(G\right)}$, so
that $\E X_{v,w}=4/\left(2^{i}\ell\right)$ and $e\left(G\right)=\sum_{\left(v,w\right)\in A\times B}X_{v,w}$.
We have 
\[
\E e\left(G\right)=\sum_{i=1}^{\ell}\sum_{v\in A}\sum_{w\in B_{i}}\frac{4}{2^{i}\ell}=4n \floor{n^{\varepsilon}},\quad\Var e\left(G\right)=\sum_{i=1}^{\ell}\sum_{v\in A}\sum_{w\in B_{i}}\frac{4}{2^{i}\ell}\left(1-\frac{4}{2^{i}\ell}\right)\le4n^{1+\varepsilon},
\]
so by Chebyshev's inequality we have $\Pr\left(e\left(G\right)<2n^{1+\varepsilon}\right)=o\left(1\right)$.
Similarly, for any $A'\subseteq A$ and $B'\subseteq\bigcup_{i=r}^{\ell}B_{i}$
with $\left|A'\right|,\left|B'\right|\le m$, we have $e_{G}\left(A',B'\right)=\sum_{\left(v,w\right)\in A'\times B'}X_{v,w}$.
Then,
\[
\E e\left(A',B'\right),\quad\Var e\left(A',B'\right)\le\frac{4}{2^{r}\ell}m^{2}\le\frac{16K^{2}n^{\varepsilon}m}{\ell},
\]
so by Bernstein's inequality (see for example~\cite{BLM13}), we have
\begin{align*}
\Pr\left(e_{G}\left(A',B'\right)>\frac{32K^{2}n^{\varepsilon}m}{\ell}\right)&\le
\Pr\left(e_{G}\left(A',B'\right)>\E e_{G}\left(A',B'\right)+\frac{16K^{2}n^{\varepsilon}m}{\ell}\right)\\
&\le\exp\left(-\frac{\frac{1}{2}\left(16K^{2}n^{\varepsilon}m/\ell\right)^{2}}{\left(16K^{2}n^{\varepsilon}m/\ell\right)+\frac{1}{3}\left(16K^{2}n^{\varepsilon}m/\ell\right)}\right)\\
&=e^{-\Omega\left(n^{\varepsilon}m/\ell\right)}=o\left(n^{-(2m+1)}\right).
\end{align*}
We may then take the union bound over all (at most $n$) choices of
$m$, and all (at most $\left(\binom{n}{1}+\dots+\binom{n}{m}\right)^2\le n^{2m}$) choices of $A',B'$.
\end{proof}

\end{appendices}

\end{document}